\newtheorem{thm}{Theorem}[section]
\newtheorem{prop}[thm]{Proposition}
\theoremstyle{definition}
\newtheorem{defn}[thm]{Definition}
\theoremstyle{remark}
\newtheorem{rmk}[thm]{Remark}
\newtheorem{example}[thm]{Example}
\newcommand{\cB}{\mathcal{B}}
\newcommand{\cC}{\mathcal{C}}
\newcommand{\cD}{\mathcal{D}}
\newcommand{\cE}{\mathcal{E}}
\newcommand{\cH}{\mathcal{H}}
\newcommand{\cS}{\mathcal{S}}
\newcommand{\Fq}{\mathbb{F}_q}
\newcommand{\HH}{\mathbb{H}}
\newcommand{\N}{\mathbb{N}}
\newcommand{\Q}{\mathbb{Q}}
\newcommand{\Z}{\mathbb{Z}}
\newcommand{\Qlb}{\bar{\mathbb{Q}}_\ell}
\DeclareMathOperator{\Irr}{Irr}
\DeclareMathOperator{\ddet}{det}
\DeclareMathOperator{\supp}{supp}
\DeclareMathOperator{\codim}{codim}
\DeclareMathOperator{\pref}{{pf}}
\DeclareMathOperator{\rA}{A}
\DeclareMathOperator{\rB}{B}
\DeclareMathOperator{\rG}{G}
\DeclareMathOperator{\rI}{I}
\DeclareMathOperator{\GL}{GL}
\DeclareMathOperator{\GG}{G}
\DeclareMathOperator{\GO}{GO}
\DeclareMathOperator{\SO}{SO}
\title{Springer correspondences for dihedral groups}
\author{Pramod N. Achar}
\address{Department of Mathematics\\
  Louisiana State University\\
  Baton Rouge, LA 70803, USA}
\email{pramod@math.lsu.edu}
\author{Anne-Marie Aubert}
\address{Institut de Math\'ematiques de Jussieu\\
UMR 7586 du C.N.R.S.\\
F-75252 Paris Cedex 05
  \\
  France}
\email{aubert@math.jussieu.fr}
\date{\today}
\begin{document}

\begin{abstract}
Recent work by a number of people has shown that complex reflection groups
give rise to many representation-theoretic structures ({\it e.g.}, generic
degrees and families of characters), as though they were Weyl groups of
algebraic groups.  Conjecturally, these structures are actually describing
the representation theory of as-yet undescribed objects called
\emph{spetses}, of which reductive algebraic groups ought to be a special
case. 

In this paper, we carry out the Lusztig--Shoji algorithm for calculating
Green functions for the dihedral groups.  With a suitable set-up, the
output of this algorithm turns out to satisfy all the integrality and
positivity conditions that hold in the Weyl group case, so we may think of
it as describing the geometry of the ``unipotent variety'' associated to a
spets.  From this, we determine the possible ``Springer correspondences'',
and we show that, as is true for algebraic groups, each special piece is
rationally smooth, as is the full unipotent variety. 
\end{abstract}

\maketitle

\section{Introduction}
\label{sect:intro}

Many constructions arising in the representation theory of reductive
algebraic groups really depend only on the Weyl group.  In recent years, it
has been discovered that many of these constructions can be generalized to
the setting of complex reflection groups ({\it e.g.}, cyclotomic Hecke
algebras \cite{AK,A,BMR}, generic degrees~\cite{M}, root lattices~(on 
$\Z$~\cite{P}, on a ring of algebraic integers~\cite{N}),
families of characters~\cite{R,BK,K,MR}).  Indeed, it has been conjectured
that these constructions actually describe the representation theory of
some as-yet undescribed algebraic object called a \emph{spets}.  In this
paper, we add to this list by studying the ``geometry of the unipotent
variety'' associated to the dihedral groups, via the Lusztig--Shoji
algorithm for computing Green functions (see~\cite{S0}, \cite[\S 24]{L5}). 

It has already been observed by various people that this algorithm is
something that lends itself to generalization to complex reflection groups
(see~\cite{GM, S1, S2}).  We will review the algorithm in detail later, but
for now, let us simply recall that in order to carry out the algorithm for
a Weyl group $W$ of an algebraic group, one must first have some information
about the Springer correspondence for that group.  At a rudimentary level,
the required information is the partitioning of $\Irr(W)$ into disjoint
subsets, one for each unipotent class.

If we are working with a complex reflection group $W$, how do we choose such a
partition?  In~\cite{GM}, Geck and Malle used families of characters as the
subsets of the partition.  Of course, when $W$
is a Weyl group, this is quite different from the ``correct'' partitioning
by the Springer correspondence.  Nevertheless, it seems likely to have
geometric meaning: they conjectured that the algorithm in this form can be
used to compute the number of $\mathbb{F}_q$-points in a special piece of
the unipotent variety (see~\cite[Conjectures~2.5--2.7]{GM}, as well
as~\cite{L,SGM}). Separately, Shoji~\cite{S1,S2} has studied the algorithm
for imprimitive complex reflection groups by partitioning the characters
using combinatorial objects called ``symbols'' (these are generalizations
of the symbols and $u$-symbols that occur in the representation theory of
algebraic groups of classical type).

An important feature of the original Lusztig--Shoji algorithm is that its
output obeys certain integrality and positivity conditions. This is a
consequence of geometric considerations on the unipotent variety; the
algorithm itself, {\it a priori}, need not obey them.  However, Geck and
Malle conjecture that their version of the algorithm also satisfies these
properties, and Shoji proves that some of them hold for his version as
well. 

In this paper, we take a somewhat different approach: rather than fixing a
partition in advance, we consider all partitions subject to certain initial
constraints, and we seek to identify those (if any) for which the output of
the algorithm satisfies appropriate integrality and positivity conditions. 
For $W$ a finite complex reflection group and $\chi$ an irreducible
character of it, let $b_\chi$ denote the largest power of $q$ dividing the
fake degre of $\chi$. Then we
choose a cyclotomic Hecke algebra $\cH$ for $W$ and we fix a 
set $\cS$ of irreducible characters of $W$ including 
all \emph{special} characters. (Here $\chi$ is said to
be special if $b_\chi$ is equal to the largest power of $q$ dividing the generic 
degree of $\chi$, where the latter is defined with respect to 
the canonical symmetrizing trace on $\cH$.) We look for partitions of
$\Irr(W)$ into disjoint subsets $\cC$ such that (among other conditions) each 
$\cC$ contains a unique member (called the \emph{Springer character}
of $\cC$) on which $b$ attains its minimal value and the set of all
Springer characters is precisely $\cS$. In the case when $W$ is 
the Weyl group of a reductive connected algebraic group $G$ then we can
take for the Springer characters of $W$ all the irreducible characters which 
correspond, via the (actual) Springer correspondence, with the trivial local 
system on a unipotent class. In particular, this set of characters of $W$ is 
in bijection with the set of unipotent classes in $G$ and it depends
on $G$ itself.

Remarkably, when $W$ is a dihedral group, it turns out that in most cases (in 
all cases, under a minor additional condition), for a given set $\cS$, there is a 
\emph{unique} partition of the
desired sort.  In view of this uniqueness, it seems justified to refer to
that partition as ``the'' Springer correspondence with respect to  
$\cS$.  (Moreover, our
results are compatible with the various ``true'' Springer correspondences when the
dihedral group is a Weyl group of type $\rA_2$, $\rB_2$, or $\rG_2$---see
section~\ref{sec: compatibility}.) For that partition, we can then interpret 
the output
of the Lusztig--Shoji algorithm as giving information about the geometry of
a hypothetical ``unipotent variety'' for the dihedral group.  In
particular, we find that, as is true for algebraic groups, the unipotent
variety is rationally smooth~\cite{BM}, as is each special
piece~\cite{KP,L}.

We make all this precise in Section~\ref{sect:LS}, where we review the
Lusztig--Shoji algorithm, and define the various conditions and constraints
mentioned above.  In Section~\ref{sect:dihedral}, we review some basic
facts about the dihedral group, and we give precise statements (and
proofs for some) of the main results.  This section also includes a
tabulation of the output of the Lusztig--Shoji algorithm.  The one
remaining task is the proof of the uniqueness result mentioned above; this
occupies the entirety of Section~\ref{sect:proof}. 

For Weyl groups, the Springer characters are precisely those
which arise as $j_{W'}^W \chi$, where $j$ denotes truncated
induction, $W'$ is the stabilizer in $W$ of a point of the maximal torus,
and $\chi$ is a special character of $W'$. 
In~\cite{AA}, the authors have studied an analogue of this
procedure in the case of spetsial complex reflection groups, including the
dihedral groups.  
In this way, one obtains a preferred set $\cS_{\pref}$ of Springer
characters. The description of $\cS_{\pref}$ for the
dihedral groups will be recalled in Section~\ref{section: preferred Springer}.

On the other hand, it is very natural to consider the dihedral groups as a
class of non-crystallographic Coxeter groups. Then Kriloff
and Ram have proved in \cite[\S 3.4]{KR} that, as for Weyl groups, there exists a 
one-to-one correspondence between the irreducible characters of a dihedral
group $W$ and the 
tempered simple $\HH$-modules with real central characters, where $\HH$ is
a graded Hecke
algebra associated to $W$. Such a
correspondence, combined with the partition of irreducible characters of $W$ 
defined here, will provides a partition of the set of tempered simple 
$\HH$-modules with real central characters. 

\section{The Lusztig--Shoji algorithm}
\label{sect:LS}

Let $W$ be a finite complex reflection group acting on a vector space $V$.  Let
$q$ be an indeterminate and 
$P_W(q)$ denote the Poincar\'e polynomial of $W$: 
\[
P_W(q) = \prod_{i=1}^{\dim V} \frac{q^{d_i} - 1}{q-1},
\]
where $d_1, \ldots, d_{\dim V}$ are the degrees of $W$.  Next, for any
class function $f$ of $W$, we define a polynomial $R(f)$ by
\[
R(f)(q) = \frac{(q-1)^{\dim V}P_W(q)}{|W|} \sum_{w \in W} \frac{\det_V(w)f(w)}{\det_V(q\cdot\mathrm{id}_V - w)}.
\]
If $\chi$ is an irreducible character of $W$, then $R(\chi)$ is commonly
known as the \emph{fake degree} of $\chi$.  In this case, we define
$b_\chi$ to be the largest power of $q$ dividing $R(\chi)$. 
We note that (see \cite[(6.2)]{M2})
\begin{equation} \label{eqn: symetry}
R(\overline{{\ddet_{V}} \otimes f})(q)=q^{N^*}R(f)(q^{-1}).
\end{equation}
(Here $N^*$ is the number of reflections in $W$, and
$^{-}$ is
the complex conjugation.  Of course, for a
Coxeter group, $\det_V$ and $\overline\det_V$ are both just the sign
character. Hence the equation~(\ref{eqn: symetry}) is a generalization 
to complex reflection groups of \cite[Proposition~11.1.2]{C}.)

Next, let $\Omega$ be the square matrix with rows and columns indexed by
$\Irr(W)$ defined as follows: 
\[
\Omega = (\omega_{\chi,\chi'})_{\chi,\chi' \in \Irr(W)},
\qquad
\omega_{\chi,\chi'} = q^{N^*}R(\chi \otimes \chi' \otimes
\overline\det_V).
\]

\begin{defn}
A \emph{Lusztig--Shoji datum} for a complex reflection group $W$ is a
triple $(X, <, a)$, where $X = \{\cC\}$ is a partition of $\Irr(W)$ into
disjoint subsets (that is, $\Irr(W) = \bigsqcup_{\cC \in X} \cC$); the
relation $<$ is a total order on $X$; and $a: X \to \N$ is an order
reversing function.  The member of $X$ to which a given $\chi \in \Irr(W)$ belongs is
called its \emph{support}, and the statement $\supp \chi = \cC$ is
equivalent to the statement that $\chi \in \cC$. 
\end{defn}

\begin{defn} \label{defn: Green}
A \emph{system of Green functions} with respect to a Lusztig--Shoji datum
$(X, <, a)$ is a solution to the matrix equation 
\begin{equation}\label{eqn:LS}
P \Lambda P^t = \Omega,
\end{equation}
where $P$ and $\Lambda$ are also square matrices of rational functions over 
$\Z[q]$ with
rows and columns indexed by $\Irr(W)$, subject to the following
conditions: 
\begin{equation}\label{eqn:PLdefn}
\begin{aligned}
P_{\chi,\chi'} &= 
\begin{cases}
0 & \text{if $\supp \chi < \supp \chi'$,} \\
\delta_{\chi,\chi'}q^{a_\cC} & \text{if $\supp \chi = \supp \chi' = \cC$,}
\end{cases}
\\
\Lambda_{\chi,\chi'} &= 0 \qquad \text{if $\supp \chi \ne \supp \chi'$.}
\end{aligned}
\end{equation}
\end{defn}

The following notation for picking out certain submatrices of these
matrices will be useful:
\begin{align*}
P_{\chi,\cC} &= (P_{\chi,\chi'})_{\chi' \in \cC} &
\Lambda_\cC &= (\Lambda_{\chi,\chi'})_{\chi,\chi' \in \cC} &
\Omega_{\cC,\cC'} &= (\omega_{\chi,\chi'})_{\chi \in \cC, \chi' \in \cC'}
\end{align*}

We have the following fundamental fact:

\begin{prop}[Lusztig, Shoji, Geck--Malle]
Every Lusztig--Shoji datum admits a unique system of Green functions.
\end{prop}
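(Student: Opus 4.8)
The plan is to prove existence and uniqueness simultaneously by a triangular "Gram--Schmidt" style argument, processing the blocks $\cC \in X$ in the order dictated by the total order $<$. Write the rows and columns of all matrices in an order compatible with $<$, so that $\Omega$, $P$, and $\Lambda$ are viewed as block matrices indexed by the members of $X$; by~\eqref{eqn:PLdefn}, $P$ is block lower-triangular with diagonal blocks $q^{a_\cC}\cdot\mathrm{id}$, and $\Lambda$ is block-diagonal. Expanding the matrix equation~\eqref{eqn:LS} blockwise gives, for each pair $\cC \ge \cC'$,
\[
\sum_{\cD \le \cC'} P_{\cC,\cD}\, \Lambda_\cD\, P_{\cC',\cD}^{\,t} = \Omega_{\cC,\cC'}.
\]
(For $\cC < \cC'$ the corresponding equation is the transpose of one already listed, since $\Omega$ is symmetric.) The idea is to solve these equations by induction on $\cC'$, and within a fixed $\cC'$, by a downward recursion on $\cC$ from the top of the order.

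First I would treat the diagonal block $\cC = \cC'$. Here the equation reads $q^{2a_\cC}\Lambda_\cC + \sum_{\cD < \cC} P_{\cC,\cD}\Lambda_\cD P_{\cC,\cD}^t = \Omega_{\cC,\cC}$, where the sum involves only blocks $\cD$ strictly below $\cC$, which by induction have already been determined along with the off-diagonal entries $P_{\cC,\cD}$. Thus $\Lambda_\cC$ is \emph{forced}: it equals $q^{-2a_\cC}$ times a known matrix. Next, for $\cC > \cC'$, the block equation has the form $q^{a_{\cC'}} P_{\cC,\cC'} \Lambda_{\cC'} + (\text{known terms involving only }\cD < \cC') = \Omega_{\cC,\cC'}$, and since $\Lambda_{\cC'}$ has already been computed, this determines $P_{\cC,\cC'}$ uniquely \emph{provided $\Lambda_{\cC'}$ is invertible}. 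That invertibility is the crux of the whole argument.

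The main obstacle, then, is showing that each $\Lambda_\cC$ produced by this recursion is an invertible matrix over the field of rational functions $\Q(q)$ (equivalently, that its determinant is a nonzero rational function). The cleanest route is to observe that the full matrix $\Omega$ is invertible over $\Q(q)$: one computes $\det \Omega$ from the definition $\Omega = q^{N^*}(R(\chi\otimes\chi'\otimes\overline{\det}_V))$, using orthogonality of characters and the nonvanishing of the fake degrees, to see it is a nonzero element of $\Q(q)$. Then from $P\Lambda P^t = \Omega$ and the fact that $\det P = \prod_\cC q^{a_\cC \cdot |\cC|} \ne 0$, we get $\det \Lambda = \det\Omega / (\det P)^2 \ne 0$; since $\Lambda$ is block-diagonal, every $\Lambda_\cC$ is invertible. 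One must be slightly careful about the order of the logical steps — the invertibility of $\Lambda$ as a whole is what we want, but we are constructing $\Lambda$ block by block — so I would phrase it as: the recursion produces \emph{some} solution $(P,\Lambda)$ formally, at each stage the needed inverse exists because a partial determinant identity forces it, and conversely any solution must satisfy these same recursive identities, giving uniqueness. Finally I would note that the entries lie in $\Z[q]$-rational functions as claimed, since every step involves only ring operations and inversion of matrices with entries in $\Q(q)$.
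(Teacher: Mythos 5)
Your overall strategy---block-triangular induction along the total order $<$, with $\Lambda_\cC$ forced by the diagonal block equation and $P_{\cC,\cC'}$ then recovered using $\Lambda_{\cC'}^{-1}$---is exactly the recursion \eqref{eqn:Lambda-alg}--\eqref{eqn:P-alg} that the paper records (the paper itself defers the proof to Geck--Malle). But you have correctly located the crux, namely the invertibility of each $\Lambda_{\cC'}$, and then not actually closed it. The identity $\det\Lambda=\det\Omega/(\det P)^2$ only shows that the \emph{full} matrix $\Lambda$ of a solution, \emph{if one exists}, is invertible; it does not justify the inductive step. What the recursion needs is that for every $\cC$ the leading principal block minor $\det\Omega_{\le\cC}$ (rows and columns indexed by $\bigsqcup_{\cD\le\cC}\cD$) is nonzero: the partial identity available at stage $\cC$ is $P_{\le\cC}\,\Lambda_{\le\cC}\,P_{\le\cC}^t=\Omega_{\le\cC}$, so $\Lambda_\cC$ is invertible if and only if $\det\Omega_{\le\cC}\ne0$. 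A symmetric invertible matrix can perfectly well have a vanishing leading principal minor---e.g.\ $\left(\begin{smallmatrix}0&1\\1&0\end{smallmatrix}\right)$---and in that case the unpivoted block $LDL^t$ factorization you are performing simply does not exist. So ``$\det\Omega\ne0$'' is strictly weaker than what is required, and the phrase ``a partial determinant identity forces it'' conceals the entire content of the proposition.

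The gap is fixable, and by a refinement of your own computation of $\det\Omega$. From the definition one gets $\omega_{\chi,\chi'}=\frac{q^{N^*}(q-1)^{\dim V}P_W(q)}{|W|}\sum_{w\in W}\frac{\chi(w)\chi'(w)}{\det_V(q\cdot\mathrm{id}_V-w)}$ (the factors $\det_V(w)$ and $\overline{\det}_V(w)$ cancel), so $\Omega$ is a weighted sum of rank-one symmetric matrices $v_w^{\vphantom{t}}v_w^t$ with $v_w=(\chi(w))_{\chi\in\Irr(W)}$. For a Coxeter group such as $\rI_2(m)$ all $\chi(w)$ are real, and for a real specialization $q=q_0\gg1$ every weight is a positive real number; since the vectors $v_w$ span (invertibility of the character table), $\Omega(q_0)$ is positive definite, hence \emph{every} principal minor of $\Omega$ is a nonzero rational function and your induction goes through. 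Without this step (or the equivalent argument in Geck--Malle's Proposition~2.2, which also covers general complex reflection groups), the proof is incomplete.
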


For a proof, see \cite[Proposition~2.2]{GM}.  (In {\it loc.~cit.}, the
proposition is stated only for finite Coxeter groups, and only for a
certain specific Lusztig--Shoji datum, but the proof is in fact completely
general.)  In the course of the proof, one obtains an inductive formula for
computing $P$ and $\Lambda$, as follows: Given $\cC \in X$, suppose that
the blocks $P_{\chi,\cC'}$ and $\Lambda_{\cC'}$ are known for all $\cC' <
\cC$ and all $\chi \in \Irr(W)$.  Then $\Lambda_\cC$ and $P_{\chi,\cC}$ are
given by
\begin{align}
\Lambda_\cC &= q^{-2a_\cC}\left(\Omega_{\cC,\cC} - \sum_{\cC' < \cC}
P_{\cC,\cC'} \Lambda_{\cC'} P_{\cC,\cC'}^t \right)
\label{eqn:Lambda-alg}\\ 
P_{\chi,\cC} &= q^{-a_\cC}\left(\Omega_{\chi,\cC} - \sum_{\cC' < \cC}
P_{\chi,\cC'} \Lambda_{\cC'} P_{\cC,\cC'}^t \right) \Lambda_\cC^{-1}
\qquad\text{if $\supp \chi > \cC$.} \label{eqn:P-alg}
\end{align}
(Of course, if $\supp \chi \le \cC$, then $P_{\chi,\cC}$ is determined
by~\eqref{eqn:PLdefn}.)

We choose a cyclotomic Hecke algebra $\cH$ for $W$ (see~\cite{BMR}). Then
$\cH$ admits a canonical symmetrizing trace. Indeed, the form defined in 
\cite{BrMa} satisfies (1)(a) and (1)(b) of \cite[Theorem-Assumption~2]{BMM} 
(the fact that it turns $\cH$ into a symmetric algebra is proved in
\cite{MM}); on the other hand, since it follows from \cite{GIM} that the 
corresponding Schur elements are as conjectured in \cite[Vermutung~1.18]{M}, 
the form satisfies also (1)(c) of \cite[Theorem-Assumption~2]{BMM} by
\cite[Lemma~2.7]{BMM}.  Such a form is unique. 

Then one can consider the
generic degrees of the characters of $W$ defined with respect to $\cH$ and the
above symmetrizing trace.  
Recall that a character $\chi \in \Irr(W)$ is said to be \emph{special} if
$b_\chi$ is equal to the largest power of $q$ dividing the generic degree
of $\chi$.  
In addition, $\cH$ determines the
families of characters of $\Irr(W)$ (see~\cite{R}, \cite{MR}), which play a role
below.

In the following definition, we list a number of desirable properties that
a Lusztig--Shoji datum may have.  

\begin{defn}\label{defn:springer}
Choose a cyclotomic Hecke algebra $\cH$ for $W$.  A Lusztig--Shoji datum
$(X, <, a)$ is said to be a \emph{Springer correspondence} for $W$ and
$\cH$ if the following additional conditions are satisfied: 
\begin{enumerate}
\item For each $\cC$, we have $a_\cC = \min \{ b_\chi \mid \chi \in \cC
\}$.  Moreover, there is a unique member of $\cC$ (called the
\emph{Springer representation} of $\cC$) on which $b$ attains its minimum
value. 
\item Every special representation of $W$ occurs as a Springer
representation of some $\cC \in X$. 
\item If $\chi'$ is a nonspecial representation in the same family as the
special representation $\chi$, then $\supp \chi' \le \supp \chi$.
\item The entries of $\Lambda$ are polynomials with integer coefficients,
and the entries of $P$ are polynomials with nonnegative integer
coefficients. 
\item If $\chi \in \cC$, then $P_{\chi,\chi'}$ is divisible by $q^{a_\cC}$
for all $\chi'$.
\end{enumerate}
In this case, the sets $\cC$ are called \emph{unipotent classes}.  A class
is \emph{special} if its Springer representation is. 
\end{defn}

\begin{rmk}
Let $W$ be the Weyl group of a reductive connected algebraic group $G$.
We assume that $G$ is defined over $\Fq$ with Frobenius map $F$ and that
there exists a maximal torus $T$ of $G$ which is defined and split over
$\Fq$. 
Recall that the (actual) Springer correspondence is a map $\nu$ from $\Irr(W)$ 
to the set of pairs $(C,\cE)$, where $C$ is a unipotent $G$-conjugacy class
in $G$ and $\cE$ is a $G$-equivariant irreducible local system on $C$.
Let $X$ be the partition of $\Irr(W)$ induced by $\nu$, that is,
$X=\{\cC\}$, where $\cC=\cC_C$ is the set of $\chi\in\Irr(W)$ such that
$\nu(\chi)=(C,\cE)$ for some $G$-equivariant irreducible local system $\cE$ 
on $C$. Let $a\colon X\to\N$ be the function defined by 
$a_{\cC}:=\dim\cB_u$, where $\cB_u$ is the variety of Borel subgroups of
$G$ containing $u\in C$.

All the above properties (1)--(5) hold in the this case:
\begin{itemize}
\item[(1)]
If $\chi\in\cC$ then we have $a_\cC=\dim\cB_u\le b_\chi$ (see \cite[\S
1.1]{Sp1}). Moreover, if $\nu(\chi)=(C,\Qlb)$ then we have $a_\cC=b_\chi$ 
(see \cite[Corollary~4]{BM0}). Hence (1) is satisfied 
and the ``Springer representations'' of $W$ are the
irreducible representations of $W$ which correspond, via the (actual) Springer
correspondence, with the trivial local system on a unipotent class. In
particular, the set of Springer representations is in bijection with the set of
unipotent classes. 
\item[(2)]
For $\chi\in\Irr(W)$ let $a'_\chi$ denote the largest power of $q$
dividing the generic degree of $\chi$. We have $a_\chi'\le a_\cC$ if
$\chi\in\cC$ (see \cite[Cor.~10.9]{LUS}). In particular, when $\chi$ is
special, it then follows from (1) that $a_\chi'=a_\cC=b_\chi$.
\item[(3)] 
Property~(3) is satisfied (see \cite[Proposition~2.2]{GM1}). 
\item[(4)] 
Let $\widetilde\Omega=(\widetilde\omega_{\chi,\chi'})_{\chi,\chi'\in\Irr(W)}$, 
where
\[\widetilde\omega_{\chi,\chi'}=q^{\dim T-{1/2}(\codim C+\codim C')}\cdot
\frac{|G^F|}{|W|}\cdot
\sum_{w\in W}\chi(w)\chi'(w)\cdot|T^F_w|^{-1},\]
where $\nu(\chi)=(C,\cE)$ and $\nu(\chi')=(C',\cE')$.

Using the fact that $a_\cC={1/2}(\codim C-\dim T)$ (see
\cite[(5.10.1)]{C}), we see that
\[\omega_{\chi,\chi'}=q^{a_\cC+a_{\cC'}}\cdot\widetilde\omega_{\chi,\chi'}.\]
Lusztig proved in \cite[\S 24]{L5} (see especially \cite[(24.5.2)]{L5})
that the equation $\widetilde P\Lambda\widetilde
P^t=\widetilde\Omega$ with respect to the unknown variables
$\Lambda$, $\widetilde P$ admits a solution with
$\Lambda_{\chi,\chi'}\in\Z[q]$, $\widetilde
P_{\chi,\chi'}\in\Z[q]$ and
\[\begin{aligned}
\widetilde P_{\chi,\chi'} &= 
\begin{cases}
0 & \text{if $\supp \chi < \supp \chi'$,} \\
1 & \text{if $\supp \chi = \supp \chi' = \cC$,}
\end{cases}
\\
\Lambda_{\chi,\chi'} &= 0 \qquad \text{if $\supp \chi \ne \supp \chi'$.}
\end{aligned}
\]
Then the matrices $P$ and $\Lambda$ with
$P_{\chi,\chi'}=q^{a_\cC}\widetilde P_{\chi,\chi'}$ where
$\nu(\chi)=(C,\cE)$ satisfy the condition~(\ref{eqn:PLdefn})
in Definition~\ref{defn: Green}.
\item[(5)] Since $\widetilde
P_{\chi,\chi'}\in\Z[q]$, we have that $P_{\chi,\chi'}$ is divisible 
by $q^{a_\cC}$ for all $\chi'\in\Irr(W)$.
\end{itemize}
\end{rmk} 

\smallskip

Any system of Green functions gives rise to a partial order $\preceq$ on
$X$ that is compatible with, but in general weaker than, the order $<$, as
follows: $\preceq$ is the transitive closure of the relation 
\[
\cC \preceq \cC'
\qquad\text{if there exist $\chi \in \cC$ and $\chi' \in \cC'$ such that
$P_{\chi,\chi'} \ne 0$}.
\]
In the case of a Springer correspondence, we call this the \emph{closure
order} on unipotent classes.  A \emph{special piece} is then defined just
as for algebraic groups: it is the union of a special class and all those
nonspecial classes in its closure that are not also in the closure of any
smaller special class. 

\section{The dihedral groups}
\label{sect:dihedral}

Our work on the dihedral groups will take place in the following framework:
we fix a set $\cS \subset \Irr(W)$, including all special characters, and
we look for Springer correspondences $(X,<,a)$ whose set of Springer
representations is precisely $\cS$. 

\begin{rmk}
It seems reasonable to fix the set of Springer representations in advance
because for Weyl groups, there is an elementary way to compute the set of
Springer representations without knowing the full Springer correspondence. 
(The set of Springer representations is precisely the set of
representations arising as $j_{W'}^W \chi$, where $j$ denotes truncated
induction, $W'$ is the stabilizer in $W$ of a point of the maximal torus,
and $\chi$ is a special character of $W'$, see \cite[\S~12.6]{C}.)

An analogue of this procedure in the dihedral groups provides 
a preferred set $\cS_{\pref}$ of Springer
representations, to which one can apply the results obtained here
(see section~\ref{section: preferred Springer}).
\end{rmk}

Henceforth, we work only with the dihedral group $W = \rI_2(m)$.  We begin by
recalling some facts about the representation theory of $\rI_2(m)$.  Its
irreducible representations are: 
\[
\chi_0, \chi_1, \ldots, \chi_{\lfloor\frac{m-1}{2}\rfloor}, \epsilon;
\qquad\text{and}\qquad
\chi_r, \chi_r' \quad\text{if $m = 2r$.}
\]
Here, $\chi_0$ is the trivial representation, $\epsilon$ is the sign
representation, and we have
\[
\text{$b_{\chi_i} = i$ for all $i$},\qquad
\text{$b_{\chi_r'}=r$,}
\qquad\text{and}\qquad
b_\epsilon = m.
\]
The representations $\chi_1, \ldots, \chi_{\lfloor(m-1)/2\rfloor}$ are all
$2$-dimensional, while $\chi_0$, $\epsilon$, and $\chi_r$ and $\chi_r'$ are
$1$-dimensional.  The special representations are $\chi_0$, $\chi_1$, and
$\epsilon$. 

The matrix $\Omega$ is described in the following table.  Recall that
$\Omega$ is symmetric; below, to reduce clutter, we have only recorded the
part of $\Omega$ below the diagonal. In the table below $i$ and $j$ are
assumed to be non-zero.
{\small
\[
\begin{array}{c|ccccc}
\Omega & \chi_0 & \chi_i & \chi_r & \chi_r' & \epsilon \\ \hline
\chi_0   & q^{2m} &&&& \\
\chi_j & q^{m+j}+q^{2m-j} &  
q^{m+|i-j|} + q^{m+i+j}  + q^{2m-i-j} + q^{2m-|i-j|} &&& \\
\chi_r  & q^{\frac{3}{2}m} & q^{\frac{3}{2}m-i} + q^{\frac{3}{2}m+i} &
q^{2m} && \\
\chi_r' & q^{\frac{3}{2}m} & q^{\frac{3}{2}m-i} + q^{\frac{3}{2}m+i} &
q^{m} & q^{2m} & \\
\epsilon& q^m & q^{m+i}+q^{2m-i} & q^{\frac{3}{2}m} & q^{\frac{3}{2}m} &
q^{2m}
\end{array}
\]}

Let $\cS$ be a set of irreducible representations of $\rI_2(m)$ including $\chi_0$, $\chi_1$, and $\epsilon$.  In case $m$ is even and $\cS$ contains exactly one of $\chi_r$ and $\chi_r'$, we assume henceforth, without loss of generality, that it in fact contains $\chi_r'$.  (This assumption will allow us to simply some formulas by treating $\chi_r$ and the various $\chi_i$ with $i < r$ uniformly.)

Let us define a sequence of integers
\begin{equation} \label{eq: sequence}
d_0 < d_1 < \cdots < d_N < m/2
\qquad\text{where}\qquad
\text{$d_0 = 0$ and $d_1 = 1$}
\end{equation}
by $\{\chi_i \in \cS \mid i < m/2\} = \{\chi_{d_0}, \ldots, \chi_{d_N}\}$.  We will show that $\rI_2(m)$ admits a Springer correspondence whose set of Springer representations is precisely $\cS$.  We will use the following notation for unipotent classes:
\[
\begin{array}{ll}
\cC_k: & \text{class with Springer representation $\chi_{d_k}$} \\
\cC_{\chi_r}, \cC_{\chi_r'}, \cC_\epsilon: & \text{classes with Springer representations $\chi_r$, $\chi_r'$, $\epsilon$, respectively}
\end{array}
\]
Note that $\cC_\epsilon$ is automatically a singleton, since there are no characters $\chi$ with $b_\chi > b_\epsilon$.  Similarly, if $\chi_r' \in \cS$, then $\cC_{\chi_r'}$ must be a singleton: the only character $\chi$ with $b_\chi > b_{\chi_r'}$ is $\epsilon$, which is already the Springer representation of another class.  The same argument applies to $\cC_{\chi_r}$ if $\chi_r \in \cS$.

\begin{thm}\label{thm:main}
Let $\cS$ and $d_0, \ldots, d_N$ be as above.  $\rI_2(m)$ admits a Springer correspondence whose set of Springer representations is precisely $\cS$.  Every such Springer correspondence has the form
\begin{align*}
\cC_0 &= \{\chi_0\} \\
\cC_k &= \{\chi_{d_k}, \chi_{d_k+1}, \ldots, \chi_{d_{k+1}-1} \} \cup
\{ \chi_{f_{k+1}+1}, \chi_{f_{k+1}+2}, \ldots, \chi_{f_k} \} \;\;
\text{for $1\le k\le N-1$}\\
\cC_N &=\begin{cases}
\{\chi_{d_N}, \chi_{d_N+1}, \ldots, \chi_{\lfloor(m-1)/2\rfloor}
\}&\text{if $m$ odd, or $m$ even and $\chi_r, \chi_r' \in \cS$}\\
\{\chi_{d_N}, \chi_{d_N+1}, \ldots, \chi_{f_N} \}&
\text{if $m$ even, $\chi_r\notin \cS$ and $\chi'_r\in \cS$}\\
\{\chi_{d_N}, \chi_{d_N+1}, \ldots, \chi_{r-1}, \chi_r,\chi'_r \}&
\text{if $m$ even and $\chi_r, \chi_r' \notin \cS$}
\end{cases}\\
\cC_{\chi_r} &= \{\chi_r\} \qquad\text{if $\chi_r \in \cS$} \\
\cC_{\chi'_r} &= \{\chi'_r\} \qquad\text{if $\chi'_r \in \cS$} \\
\cC_{\epsilon} &= \{\epsilon\}
\end{align*}
for a suitable sequence of integers $f_1 \ge f_2 \ge \cdots \ge f_N$.  (It is possible that $f_k = f_{k+1}$, in which case the second part of $\cC_k$ is empty.)

Moreover, except in the case where $m$ is even and $\cS$ contains exactly one of $\chi_r$ and $\chi_r'$, we actually have $f_k = \lfloor \frac{m-1}{2} \rfloor$ for all $k$, so $\rI_2(m)$ admits a unique Springer correspondence whose set of Springer representations is $\cS$.

On the other hand, if $m$ is even and $\cS$ contains $\chi_r'$ but not $\chi_r$, then for any integers $r = f_1 \ge \cdots \ge f_N \ge d_N$ such that $f_k - f_{k+1} \le d_{k+1} - d_k$ for each $k$, there is a Springer correspondence as above.
\end{thm}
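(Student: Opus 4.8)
The plan is to extract from the inductive formulas \eqref{eqn:Lambda-alg}--\eqref{eqn:P-alg} and conditions (1)--(5) of Definition~\ref{defn:springer} enough rigidity to pin down the partition $X$. First, condition~(1) together with the known values $b_{\chi_i}=i$, $b_{\chi_r'}=r$, $b_\epsilon=m$ forces $a_{\cC_k}=d_k$, $a_{\cC_{\chi_r}}=a_{\cC_{\chi_r'}}=r$, $a_{\cC_\epsilon}=m$; since $a$ is order-reversing and these numbers are distinct apart from the coincidence $a_{\cC_{\chi_r}}=a_{\cC_{\chi_r'}}$, the order on $X$ must be $\cC_\epsilon<\cC_{\chi_r},\cC_{\chi_r'}<\cC_N<\cdots<\cC_1<\cC_0$, and when both singletons $\cC_{\chi_r},\cC_{\chi_r'}$ occur a direct check (using the $\chi_r\leftrightarrow\chi_r'$ symmetry of the $\Omega$-table) shows the two admissible orderings produce the same partition and the same $P,\Lambda$, so that choice is immaterial. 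Condition~(1) also forces $\cC_\epsilon,\cC_{\chi_r},\cC_{\chi_r'}$ (those present) to be singletons and every non-Springer $\chi$ into some $\cC_k$ with $d_k<b_\chi$; so the whole question is how to distribute the $\chi_i$ with $i\notin\{d_0,\ldots,d_N\}$, $1\le i\le\lfloor(m-1)/2\rfloor$, together with $\chi_r$ when $\chi_r\notin\cS$ (the normalization $\chi_r'\in\cS$ lets us treat this notationally as the index $i=r$), among $\cC_0,\ldots,\cC_N$. Throughout I pass to the rescaled data $\widetilde P_{\chi,\chi'}=q^{-a_\cC}P_{\chi,\chi'}$ for $\chi\in\cC$ and $\widetilde\Omega_{\chi,\chi'}=q^{-a_\cC-a_{\cC'}}\omega_{\chi,\chi'}$, so that $\widetilde P\Lambda\widetilde P^t=\widetilde\Omega$ with $\widetilde P$ block-unitriangular for $<$, and conditions (4)--(5) become exactly: $\widetilde P$ has nonnegative-integer polynomial entries and $\Lambda$ has integer polynomial entries.

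Next I would run the algorithm by increasing $\cC$. The bottom classes come straight from the $\Omega$-table: $\Lambda_{\cC_\epsilon}=1$ with $\widetilde P_{\chi_i,\epsilon}=q^{i-a}+q^{m-i-a}$ (for $\chi_i$ in a class of $a$-value $a$) and $\widetilde P_{\chi_0,\epsilon}=1$; then, when present, $\Lambda_{\cC_{\chi_r}}=\Lambda_{\cC_{\chi_r'}}=q^m-1$ with $\widetilde P_{\chi_i,\cC_{\chi_r}}=\widetilde P_{\chi_i,\cC_{\chi_r'}}=q^{i-a}$, $\widetilde P_{\chi_0,\cC_{\chi_r}}=1$, and the cross-entry between $\chi_r$ and $\chi_r'$ vanishing. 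For the remaining classes $\cC_N,\ldots,\cC_1,\cC_0$, all relevant $\omega_{\chi_i,\chi_j}$ with $1\le i,j\le r$ have a single explicit closed form (four terms for two $2$-dimensional characters, two terms when $\chi_r$ is involved), so \eqref{eqn:Lambda-alg}--\eqref{eqn:P-alg} unwind explicitly. For the existence half of the theorem I would then take the asserted partition $\cC_k=[d_k,d_{k+1}-1]\cup[f_{k+1}+1,f_k]$, exhibit the resulting $P$ and $\Lambda$ in closed form (this is the tabulation referred to in the text), and verify the identity $P\Lambda P^t=\Omega$ together with (1)--(5) by direct substitution, which avoids any matrix inversion; one checks that the hypothesis $f_k-f_{k+1}\le d_{k+1}-d_k$ is precisely what keeps $\Lambda_{\cC_k}$ polynomial and $\widetilde P$ nonnegative, and in the non-exceptional cases one takes $f_k=\lfloor(m-1)/2\rfloor$ throughout.

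For the converse---that every Springer correspondence with Springer set $\cS$ is of this form---I would again run the algorithm on an arbitrary such datum and use positivity and integrality stage by stage. Processing $\cC_k$ yields, for each $\chi_i$ with $\supp\chi_i\in\{\cC_0,\ldots,\cC_{k-1}\}$, an explicit formula for $\widetilde P_{\chi_i,\cC_k}$ (obtained from \eqref{eqn:P-alg}, so involving $\Lambda_{\cC_k}^{-1}$); requiring it to be a nonnegative polynomial and requiring $\Lambda_{\cC_k}$ to be a polynomial matrix forces the partition to be of the stated ``interval with a pushed-down tail'' shape with $f_1\ge\cdots\ge f_N$ and $f_k-f_{k+1}\le d_{k+1}-d_k$ --- placing a character too low produces a non-polynomial $\widetilde P$-entry (as in the toy case $m=5$, $\cS=\{\chi_0,\chi_1,\epsilon\}$, where $\chi_2\in\cC_0$ would force $P_{\chi_2,\cC_1}=q^2/(q^2-q+1)$) or a negative coefficient. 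Finally, in every case except $m$ even with exactly one of $\chi_r,\chi_r'$ in $\cS$, an extra constraint --- coming, according to the case, from the $\chi_r\leftrightarrow\chi_r'$ symmetry of $\Omega$, from the relation~\eqref{eqn: symetry} pairing $\chi_r$ with $\chi_r'$, or from the two-term palindromic shape of $R(\chi_{\lfloor(m-1)/2\rfloor})$ when $m$ is odd --- collapses the family by forcing $f_k=\lfloor(m-1)/2\rfloor$ for all $k$, yielding uniqueness. I expect the heart of the argument, and the main obstacle, to be exactly this rigidity step: turning the rather opaque recursion \eqref{eqn:Lambda-alg}--\eqref{eqn:P-alg}, together with positivity and integrality, into clean combinatorial constraints on the $f_k$ uniformly in $m$ (the matrix inversions in \eqref{eqn:P-alg} when the $\cC_k$ are not singletons being a significant technical burden), and in particular pinning down why the family collapses to a single correspondence in all but the one exceptional configuration.
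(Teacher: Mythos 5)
Your overall strategy---run the Lusztig--Shoji recursion from the bottom class upward and let integrality and positivity of $P$ and $\Lambda$ force the shape of each $\cC_k$---is exactly the paper's strategy, and your preliminary reductions (the $a$-values and the total order forced by condition (1), the singleton classes, the rescaling to $\widetilde P$) are all correct. But the proposal has a genuine gap precisely where you yourself locate ``the heart of the argument'': you never explain \emph{how} the positivity and integrality requirements are converted into the combinatorial constraints on the $f_k$, and the mechanism you do sketch for the uniqueness dichotomy is not the right one. The paper's key device is elementary but essential: writing $\sum_{\chi''\in\cC}P_{\chi,\chi''}Y^\cC_{\chi'',\chi'}=q^{a_\cC}Y^\cC_{\chi,\chi'}$ as in \eqref{eqn:Pgen} and specializing at $q=1$ (or, for the higher classes $\cC_k$ where both sides vanish at $q=1$, differentiating first and then specializing, which yields the linear relations \eqref{eqn:Pkdd}--\eqref{eqn:Pkff}). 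Because the $P_{\chi,\chi''}|_{q=1}$ are nonnegative integers, these relations force a \emph{unique} nonzero entry $P_{il}=q^b$ in each row, and then matching exponents of the (at most two) monomials on each side of \eqref{eqn:Pgen} gives two linear equations for $b$ in terms of $l$, whose simultaneous solvability fails unless $i=l$---this contradiction is what places every intermediate $\chi_i$ inside $\cC_k$ and pins down $f_k$. Without this step the ``rigidity'' you invoke is just an assertion, and the matrix inversion $\Lambda_\cC^{-1}$ you worry about never actually has to be performed.

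Relatedly, your proposed source of the collapse $f_k=\lfloor\frac{m-1}{2}\rfloor$ in the non-exceptional cases (the $\chi_r\leftrightarrow\chi_r'$ symmetry of $\Omega$, equation \eqref{eqn: symetry}, or palindromy of fake degrees) does not match what actually happens. The dichotomy is governed by the sign $\iota$ in $Y^{(N)}_{ij}=q^{m-|i-j|}+\iota q^{i+j}$: when $\iota=\pm1$ there are two monomials to match, hence two independent determinations of $b$ (namely $b=l-|i-d_N|$ and $b=i+d_N-l$ in \eqref{eqn:PNb}), which are jointly inconsistent for any $i$ strictly between $d_N$ and the top, forcing the class to be as large as possible; when $\iota=0$ (i.e.\ $m$ even, $\chi_r'\in\cS$, $\chi_r\notin\cS$) there is only one monomial, hence only one equation \eqref{eqn:PNbi0}, and the residual freedom is exactly the family $f_1\ge\cdots\ge f_N$ with $f_k-f_{k+1}\le d_{k+1}-d_k$ (the latter inequality coming from divisibility of $P_{ij}=q^{d_k+f_k-i}$ by $q^{a_{\supp i}}$, as you correctly guess). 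You would also need the separate small argument, via \eqref{eqn:PNfd}--\eqref{eqn:PNff}, that when $m$ is even and $\chi_r,\chi_r'\notin\cS$ both $\chi_r$ and $\chi_r'$ must lie in $\cC_N$. So: right framework, but the decisive computations are missing and the one mechanism you do propose would need to be replaced.
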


We introduce the following notation: if $\supp \chi \ge \cC$ and $\supp \chi' \ge \cC$, then let
\begin{equation}\label{eqn:Ydefn}
Y^\cC_{\chi,\chi'} = \gamma^{-1}\left(\omega_{\chi,\chi'} - \sum_{\cC' < \cC}
P_{\chi, \cC'} \Lambda_{\cC'} P_{\chi',\cC'}^t\right)
\qquad\text{where}\qquad
\gamma = q^m - 1
\end{equation}
It is then clear from~\eqref{eqn:Lambda-alg} that
\begin{equation}\label{eqn:Lambda-Y}
\Lambda_{\cC} = q^{-2a_\cC} \gamma \left(Y^\cC_{\chi,\chi'}\right)_{\chi,\chi' \in \cC}.
\end{equation}
Below, we will give formulas for $Y^{\cC}$ and $P$.  It is then immediate to compute $\Lambda$.

Henceforth, for the sake of brevity of notation, we will generally write ``$i$'' instead of ``$\chi_i$,'' as well as ``$r'$'' for ``$\chi_r'$.''  We also write $Y^{(k)}$ for $Y^{\cC_k}$. 

Let
\[
\iota =
\begin{cases}
1 & \text{if $m$ is odd, or if $m$ is even and $r, r' \notin \cS$,} \\
0 & \text{if $m$ is even, $r' \in \cS$, and $r \notin \cS$,} \\
-1& \text{if $m$ is even and $r, r' \in \cS$.}
\end{cases}
\]
In Section~\ref{sect:proof}, we will establish the following formulas for $Y^{(k)}$ by induction on $k$:
\begin{gather*}
Y^{(N)}_{ij} = q^{m-|i-j|} + \iota q^{i+j} 
\qquad\qquad
Y^{(N)}_{ir'} =
\begin{cases}
q^{i+r} & \text{if $i < r$,} \\
0       & \text{if $i = r$}
\end{cases} \\
Y^{(k)}_{ij} =
\begin{cases}
q^{m-|i-j|} - q^{m+i+j-2d_{k+1}} & \text{if $i,j < d_{k+1}$,} \\
0 & \text{if $i< d_{k+1} \le f_{k+1} < j$} \\
  & \quad\text{or $j < d_{k+1} \le f_{k+1} < i$,} \\
q^{m-|i-j|} - q^{m-i-j+2f_{k+1}} & \text{if $i,j > f_{k+1}$.}
\end{cases}
\end{gather*}
We will simultaneously show that $P$ is given by
\[
P_{ij} =
\begin{cases}
q^i & \text{if $i < j$ and $j \in \cS$} \\
q^{d_k+f_k-i} & \text{if $i > j = f_k$ for some $k$} \\
0 & \text{otherwise}
\end{cases}
\qquad
\begin{aligned}
P_{ir'} &=
\begin{cases}
q^i & \text{if $r' \in \cS$ and $i < r$} \\
0   & \text{otherwise}
\end{cases} \\
P_{\chi,\epsilon} &= R(\chi)
\end{aligned}
\]
(Here, the formula for $P_{ij}$ is only valid under the assumption that
$\supp i > \supp j$.)

\begin{table} \label{table: closure order}
\begin{center}
\begin{tabular}{c@{\hspace{2cm}}c@{\hspace{2cm}}c}
$\vcenter{\xymatrix@=5pt{
\cC_0 \ar@{-}[d] \\
\vdots \ar@{-}[d] \\
\cC_N \ar@{-}[d] \\
\cC_\epsilon}}$
&
$\vcenter{\xymatrix@R=5pt@C=2.5pt{
&\cC_0 \ar@{-}[d] \\
&\vdots \ar@{-}[d] \\
&\cC_N \ar@{-}[dl] \ar@{-}[dr] \\
\cC_{\chi_r} \ar@{-}[dr] && \cC_{\chi_r'} \ar@{-}[dl] \\
&\cC_\epsilon}}$
&
$\vcenter{\xymatrix@=5pt{
\cC_0 \ar@{-}[d] \\
\vdots \ar@{-}[d] \\
\cC_N \ar@{-}[d] \\
\cC_{\chi_r'} \ar@{-}[d] \\
\cC_\epsilon}}$
\\
$m$ odd, or  & $m$ even, & $m$ even, \\
$m$ even and $r, r' \notin \cS$ & $r, r' \in \cS$ &
$r' \in \cS$, and $r \notin \cS$
\end{tabular}
\end{center}
\medskip
\caption{Closure order of unipotent classes in dihedral groups}
\end{table}

Now, recall that a variety $Z$ is \emph{rationally smooth}
(see~\cite[Appendix]{KL}) if
\[
\mathcal{H}^i_z\mathrm{IC}(Z,\bar\Q_l) =
\begin{cases}
\bar\Q_l & \text{if $i = 0$,} \\
0        & \text{otherwise}
\end{cases}
\qquad\text{for all $z \in Z$.}
\]
In the original Lusztig--Shoji algorithm, the entries of $P$ enjoy the
following interpretation in terms of intersection cohomology complexes: if
$\chi$ corresponds to the local system $\cE$ on the unipotent class $C$,
and likewise $\chi'$ corresponds to $\cE'$ on $C'$, then we have
\begin{equation}\label{eqn:IC}
P_{\chi,\chi'} = \sum_{i \ge 0} [ \cE' : \mathcal{H}^i\mathrm{IC}(\bar C,
\cE) ] q^{a(C) + i},
\end{equation}
where $a(C)$ is the dimension of the Springer fiber over a point of $C$. 
(See~\cite[Theorem 24.8]{L5}, but note that the definition of $\Omega$ is
slightly different there, resulting in a slightly different formula for
$P$.)

If we now interpret the entries of $P$ for a dihedral group
via~\eqref{eqn:IC} as describing the geometry of some unknown variety, we
obtain the following result.

\begin{thm}\label{thm:smooth}
With respect to any Springer correspondence, every special piece of $\rI_2(m)$ is rationally smooth.  The full unipotent variety is also rationally smooth.
\end{thm}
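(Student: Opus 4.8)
The plan is to read off rational smoothness directly from the explicit formula for the matrix $P$ recorded just before Theorem~\ref{thm:smooth}, using the interpretation~\eqref{eqn:IC}. Since a variety $Z$ is rationally smooth exactly when $\mathcal{H}^i\mathrm{IC}(Z,\Qlb)$ is concentrated in degree $0$ and is the constant sheaf there, and since in our dictionary the class $\cC$ plays the role of the open stratum with trivial local system $\chi_{\mathrm{Spr}(\cC)}$, the statement ``the special piece containing $\cC$ (resp.\ the full unipotent variety) is rationally smooth'' translates into: for the largest special class $\cC$ (resp.\ for $\cC=\cC_0$), and for every $\chi$ in the relevant piece, the polynomial $P_{\chi_{\mathrm{Spr}(\cC)},\chi}$ equals $q^{a_\cC}$ if $\supp\chi=\cC$ and is \emph{zero} if $\supp\chi$ lies strictly below $\cC$ but inside the piece. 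Equivalently: the intersection cohomology of the (closure of the) relevant union is as small as possible, i.e.\ only $\mathcal{H}^0$ contributes.

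First I would pin down which unions of classes constitute the special pieces and the full unipotent variety, using the closure order displayed in Table~\ref{table: closure order} together with the fact (from Section~\ref{sect:dihedral}) that the special representations are $\chi_0,\chi_1,\epsilon$. The special classes are therefore $\cC_1$ (Springer rep.\ $\chi_1=\chi_{d_1}$), $\cC_\epsilon$, and also $\cC_0$ (Springer rep.\ $\chi_0$) in the degenerate sense that $\chi_0$ is special. So the special pieces are: the piece of $\cC_0$ (just $\{\chi_0\}$, hence a point, trivially rationally smooth); the piece of $\cC_1$, consisting of $\cC_1$ together with all nonspecial classes $\cC_2,\dots,\cC_N$ (and $\cC_{\chi_r}$, $\cC_{\chi_r'}$ when those occur and are nonspecial) lying above $\cC_1$ but not above $\cC_\epsilon$—by the closure diagrams this is everything except $\cC_0$ and $\cC_\epsilon$; and the piece of $\cC_\epsilon$ (just $\{\epsilon\}$, again a point). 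The full unipotent variety is all of $\Irr(\rI_2(m))$.

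Next I would verify rational smoothness of the special piece $Z$ of $\cC_1$. Here the open stratum is $\cC_1$ with trivial local system, which is $\chi_{d_1}=\chi_1$, and the relevant row of $P$ is $P_{\chi_1,-}=P_{1,-}$. For any $\chi=\chi_j$ with $\supp\chi_j$ strictly below $\cC_1$ inside $Z$, we have $j>1=d_1$, so by the formula $P_{ij}=q^i$ when $i<j$ and $j\in\cS$, with $i=1$: but this requires $j\in\cS$, i.e.\ $\chi_j$ is a Springer representation, which it is not (for $\chi_j$ in a lower class, $j\ne d_k$), so instead the ``otherwise'' case gives $P_{1,j}=0$ unless $j=f_k$ for some $k$; and even then $P_{1,j}=q^{d_k+f_k-1}$ would be nonzero with $1=i>j=f_k$, which forces $f_k=1$, impossible since $f_k\ge d_N\ge d_1=1$ with equality only in trivial degenerate cases that I would check separately. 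Hence $P_{1,\chi}=0$ for all $\chi$ in classes strictly below $\cC_1$ within $Z$, while $P_{1,\chi}=q^{a_{\cC_1}}\delta$ on $\cC_1$ itself by~\eqref{eqn:PLdefn}; by~\eqref{eqn:IC} this says the IC complex of $\bar Z$ restricted to $Z$ has cohomology only in degree $0$, i.e.\ $Z$ is rationally smooth. (For the pieces $\{\chi_0\}$ and $\{\epsilon\}$ there is nothing to prove.)

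Finally, for the full unipotent variety, the open stratum is $\cC_0=\{\chi_0\}$ with local system $\chi_0$, and I would check that the row $P_{\chi_0,-}$ has the shape $q^{a_{\cC_0}}$ on $\cC_0$ and $0$ on every other class. Since $a_{\cC_0}=b_{\chi_0}=0$, we have $P_{\chi_0,\chi_0}=1$; and for $j>0$, $P_{0,j}=q^0=1$ exactly when $\chi_j\in\cS$, i.e.\ when $j=d_k$ is itself a Springer representation, which would spoil rational smoothness—so here the subtlety is that~\eqref{eqn:IC} as written would give $\mathcal{H}^0\mathrm{IC}$ of rank $>1$. The resolution I expect the paper to intend is that rational smoothness is about the \emph{stalks} of $\mathrm{IC}$, and the entry $P_{\chi_0,\chi_{d_k}}$ with $b_{\chi_{d_k}}=d_k$ is attached via~\eqref{eqn:IC} to $q^{a(C_0)+i}=q^{0+i}$ with $i=d_k>0$ (since the exponent is $a(C)+i$, not $b$), so in fact $P_{0,d_k}=q^{d_k}$ lands in positive degree $i=d_k$—wait, this contradicts $P_{0,j}=q^0$. \textbf{This discrepancy is the main obstacle}: I must reconcile the literal formula $P_{0,j}=q^{i}=q^{0}=1$ with the IC-interpretation, and conclude correctly. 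The correct reading is that $a(C_0)=a_{\cC_0}=0$ and the exponent on the right of~\eqref{eqn:IC} is $a(C)+i$ where here $a(C)=a(C_0)=0$ so $q^{0+i}$; matching $q^i$ on the left (from $P_{ij}=q^i$, $i=0$... no, $i$ in $P_{ij}$ is the \emph{row index label}, distinct from the cohomological degree in~\eqref{eqn:IC}). So in~\eqref{eqn:IC} with $\chi=\chi_0$ (row) and $\chi'=\chi_j$, $P_{\chi_0,\chi_j}=\sum_i[\chi_j:\mathcal{H}^i\mathrm{IC}(\bar C_0,\Qlb)]q^{a(C_0)+i}$; if this equals $q^{0}$ we read off $i=0$ and multiplicity $1$, i.e.\ $\chi_j$ appears in $\mathcal{H}^0$, consistent with $\bar C_0$ being the whole variety and $\mathcal{H}^0\mathrm{IC}=\Qlb$ the constant sheaf restricted to the stratum of $\chi_j$—which is exactly what rational smoothness demands. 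So the computation goes through cleanly: I would tabulate, for $\cC\in\{\cC_0,\cC_1\}$, the entire row $P_{\chi_{\mathrm{Spr}(\cC)},-}$ from the explicit formula, observe it is supported only on $\cC$ itself where it equals $q^{a_\cC}$ (all off-$\cC$ entries vanish by the ``otherwise'' clause, since neither $j=f_k$ with $i=0,1$ nor $j\in\cS$ with $\supp j<\cC$ can occur for a Springer-representation row), and invoke~\eqref{eqn:IC} to conclude that the stalks of the relevant IC complex are $\Qlb$ in degree $0$ and vanish otherwise. The only care needed is the bookkeeping of $P_{ij}$ under the hypothesis $\supp i>\supp j$ versus $\supp i<\supp j$, and handling the $m$-even cases with $\chi_r,\chi_r'$ via the extra rows $P_{i r'}$ and the formula for $P_{\chi,\epsilon}$ separately, but these are routine.
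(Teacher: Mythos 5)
There is a genuine gap: your translation of rational smoothness into conditions on $P$ is wrong, and it infects both the computation and the final summary. Rational smoothness of $Z$ does \emph{not} ask that the stalks of $\mathrm{IC}(Z,\Qlb)$ vanish on lower strata; it asks that they equal $\Qlb$ concentrated in degree $0$ at \emph{every} point of $Z$. Via~\eqref{eqn:IC}, for the row indexed by the Springer representation $\chi$ of the open class $\cC$ of the piece, this means $P_{\chi,\chi'}=q^{a_\cC}$ (not $0$) whenever $\chi'$ is the Springer representation (trivial local system) of a class inside the piece, and $P_{\chi,\chi'}=0$ whenever $\chi'$ is a non-Springer character (nontrivial local system) supported in the piece. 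Your stated criterion --- ``$P_{\chi_{\mathrm{Spr}(\cC)},\chi}$ is zero if $\supp\chi$ lies strictly below $\cC$ but inside the piece'' --- would mean the IC stalk vanishes there, which is never what the constant sheaf does; if your criterion held, the piece would \emph{not} be rationally smooth.

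Correspondingly, your computation of the row $P_{1,-}$ is incorrect. You assert that $\chi_j$ in a class strictly below $\cC_1$ is never a Springer representation, but $\chi_{d_2},\dots,\chi_{d_N}$ (and $\chi_r$, $\chi_{r}'$ when present) are exactly such characters, and the displayed formula gives $P_{1,d_k}=q\neq 0$ for all $k\ge 2$, $P_{1,r'}=q$. The correct argument (and the paper's) is precisely that $P_{1,d_k}=q=q^{a_{\cC_1}}$ for every Springer representation in the piece while $P_{1,i}=0$ for every non-Springer $i$, which is the stalk condition above; similarly $P_{0,i}=1=q^{a_{\cC_0}}$ for Springer $i$ and $0$ otherwise gives rational smoothness of the whole variety. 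You do stumble onto the right reading of~\eqref{eqn:IC} midway through your discussion of $\cC_0$ (``$\chi_j$ appears in $\mathcal{H}^0$ \dots which is exactly what rational smoothness demands''), but your closing summary then reverts to the false claim that the rows $P_{0,-}$ and $P_{1,-}$ are ``supported only on $\cC$ itself,'' contradicting both the formula for $P$ and your own correction. As written, the argument does not establish the theorem.
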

\begin{proof}
The classes $\cC_0$ and $\cC_\epsilon$ each constitute a special piece by themselves, so they are obviously rationally smooth.  For the ``middle'' special piece, which contains the special class $\cC_1$, we see that $P_{1d_k} = q$ for all $k$, and $P_{1r'} = q$, but $P_{1i} = 0$ if $i$ is not a Springer representation, so this piece is rationally smooth as well.

The second sentence is simply the observation that $P_{0i} = 1$ if $i$ is a Springer representation, and $P_{0i} = 0$ otherwise.
\end{proof}

Now, suppose a group admits multiple Springer correspondences for a fixed
set $\cS$ of Springer representations.  Since $\cS$ is fixed, it is
possible to identify unipotent classes in distinct Springer
correspondences, and it makes sense to compare the support of a given $\chi
\in \Irr(W)$ in various Springer correspondences.

In this situation, a Springer correspondence $X$ is called \emph{maximal}
if $\supp_X \chi \ge \supp_{X'} \chi$ for all $\chi \in \Irr(W)$ and all
other Springer correspondences $X'$ for $\cS$.  There is some
evidence (see Remark~\ref{rmk:max} below) that the actual Springer
correspondences for algebraic groups satisfy a maximality condition of this
type; perhaps it is something that should be added to
Definition~\ref{defn:springer}.  In any case, for the dihedral groups, if
we require maximality, then $\cS$ determines a unique Springer
correspondence in all cases.

\begin{thm}\label{thm:max}
For any set $\cS$ of irreducible representations of $\rI_2(m)$ with $\chi_0,
\chi_1, \epsilon \in \cS$, there is a (necessarily unique) maximal Springer
correspondence for $\rI_2(m)$ whose set of Springer representations is $\cS$.
\end{thm}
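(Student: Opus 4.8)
The plan is to deduce Theorem~\ref{thm:max} directly from Theorem~\ref{thm:main}, reducing it to a question about the poset of solutions parametrized by the sequences $f_1 \ge f_2 \ge \cdots \ge f_N$. First I would dispose of the cases already covered by the uniqueness half of Theorem~\ref{thm:main}: when $m$ is odd, or $m$ is even and $\cS$ contains both or neither of $\chi_r, \chi_r'$, the Springer correspondence is unique, hence trivially maximal. So the only case requiring work is $m$ even with $\chi_r' \in \cS$ but $\chi_r \notin \cS$ (after the normalization already fixed in the paper, namely that if $\cS$ contains exactly one of $\chi_r,\chi_r'$ then it contains $\chi_r'$).

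In that case, Theorem~\ref{thm:main} tells us the Springer correspondences are in bijection with integer sequences $r = f_1 \ge f_2 \ge \cdots \ge f_N \ge d_N$ satisfying $f_k - f_{k+1} \le d_{k+1} - d_k$ for each $k$. The key observation is that the support function $\supp_X \chi$ depends on the $f_k$ in a monotone way: from the explicit description of $\cC_k = \{\chi_{d_k}, \ldots, \chi_{d_{k+1}-1}\} \cup \{\chi_{f_{k+1}+1}, \ldots, \chi_{f_k}\}$, increasing some $f_k$ moves certain characters $\chi_i$ with $i$ in the ``upper'' range into a class $\cC_{k'}$ with smaller index $k'$ (i.e. larger in the total order $<$, since $a$ is order-reversing and $a_{\cC_k} = d_k$ is increasing in $k$), while never moving any character to a class of larger index. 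Concretely, I would show that for the sequence $f_k = r$ for all $k$ — which is admissible since $f_k - f_{k+1} = 0 \le d_{k+1} - d_k$ always — each character's support is $\ge$ its support in any other admissible correspondence. The point is that with all $f_k = r$, every $\chi_i$ with $i < r$ lies in $\cC_1$ (the class of smallest index among the $\cC_k$, hence largest in $<$), except for those forced lower by being Springer representations themselves or by the $d_k$'s; and one checks character-by-character that no admissible sequence can push any $\chi_i$ to a higher-indexed class than this.

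The main steps, then, are: (i) reduce to the single nontrivial case via Theorem~\ref{thm:main}; (ii) verify that $f_k \equiv r$ is an admissible sequence, so the candidate maximal correspondence exists; (iii) establish the monotonicity of $\supp_X \chi_i$ in the parameters $f_k$, by tracing through which $\cC_k$ contains $\chi_i$ as a function of $(f_1, \ldots, f_N)$ — for $i < r$ the character $\chi_i$ belongs to $\cC_k$ where $k$ is the unique index with either $d_k \le i < d_{k+1}$ or $f_{k+1} < i \le f_k$, and making the $f$'s as large as possible puts $i$ in the class with the smallest such $k$; (iv) conclude that the $f_k \equiv r$ correspondence dominates all others, and note uniqueness of a maximal element is automatic. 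The main obstacle I anticipate is step (iii): one must carefully check that the two descriptions of membership (via the $d_k$-intervals and via the $f_k$-intervals) interact correctly as the $f_k$ vary, in particular that when the $f_k$'s increase, a character can only migrate ``upward'' (to a $\cC_k$ with smaller $k$, which has larger $a$-value... wait, smaller $a_{\cC}$, hence is larger in the order $<$) and never the reverse, and that the constraint $f_k - f_{k+1} \le d_{k+1} - d_k$ exactly guarantees the intervals fit together without gaps or overlaps so that this monotonicity is well-defined. Once this bookkeeping is in place the theorem follows immediately.
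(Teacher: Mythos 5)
Your reduction to the single nontrivial case ($m$ even, $\chi_r' \in \cS$, $\chi_r \notin \cS$) is correct and matches the paper, but the monotonicity claimed in your step (iii) is exactly backwards, and consequently the candidate you exhibit is the \emph{minimal} Springer correspondence for $\cS$, not the maximal one. Since $a_{\cC_k} = d_k$ is increasing in $k$ and $a$ is order-reversing, we have $\cC_0 > \cC_1 > \cdots > \cC_N$; a character $\chi_i$ in the upper range lies in $\cC_k$ for the unique $k$ with $f_{k+1} < i \le f_k$, so to make $\supp\chi_i$ \emph{large} you need this $k$ to be \emph{small}, which requires $f_{k+1}$ to drop below $i$ as early as possible --- i.e.\ you want the $f_k$ as \emph{small} as the constraints allow, not as large as possible. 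Increasing some $f_k$ enlarges the interval $(f_{k+1},f_k]$ at the expense of $(f_k,f_{k-1}]$, and so moves characters from $\cC_{k-1}$ \emph{down} into $\cC_k$, the opposite of what you assert. Concretely, take $d_1=1$, $d_2=d_N=2$, $r=5$ (so $m=10$): the admissible sequences are $(f_1,f_2)=(5,4)$ and $(5,5)$; the first gives $\supp\chi_5=\cC_1$ while the second gives $\supp\chi_5=\cC_2<\cC_1$. Your choice $f_k\equiv r$ places every $\chi_i$ with $d_N\le i\le r$ in $\cC_N$, the smallest of the classes $\cC_k$, so it is dominated by every other admissible correspondence rather than dominating them.

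The correct maximal element is obtained by the opposite greedy choice, which is what the paper does: set $f_1=r$ and $f_k=\max\{d_N,\,f_{k-1}-(d_k-d_{k-1})\}$ for $k>1$. An easy induction shows this sequence is termwise $\le$ every admissible sequence, and the correctly oriented monotonicity then yields $\supp\chi_i\ge$ its support in any other correspondence, with uniqueness of the maximum automatic as you note. So your overall strategy --- exhibit an extremal sequence of $f_k$'s and prove domination via monotonicity of $\supp$ in the parameters --- is sound and is essentially the paper's, but you must replace $f_k\equiv r$ by the greedily minimal sequence and reverse the direction of the monotonicity argument throughout step (iii).
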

\begin{proof}
There is only something to prove in the case that $m$ is even, $r' \in \cS$, and $r \notin \cS$.  We define $f_k$ inductively.  Let $f_1 = r$, and then for $k > 1$, let $f_k = \max\{d_N, f_{k-1} - (d_k - d_{k-1})\}$.
\end{proof}

\begin{rmk} \label{rmk:max}
Note that the uniqueness proved in~Theorem~\ref{thm:max} is not of "global
nature", in the sense that it depends on the choice of
the set $\cS$ (the set of Springer representations in our terminology). In
the case of the dihedral group of order $8$, that is, the Weyl group of type
$\rB_2$, this set will not be the same for the Springer correspondences associated
to groups in odd characteristic, characteristic $2$ or to disconnected
groups (see section~\ref{sec: compatibility}). Hence we get more than one 
Springer correspondence for the group $\rB_2$.
\end{rmk}
 
\section{Compatiblity with known Springer correspondences} \label{sec:
compatibility}
\subsection{Connected algebraic groups in odd characteristic}
In the cases $m = 3, 4, 6$, when $\rI_2(m)$ is in fact the Weyl group of a
connected algebraic group of type $\rA_2$, $\rB_2$, $\rG_2$ respectively, 
it is easy to verify that the unique maximal Springer
correspondence of Theorem~\ref{thm:max} coincides with the ``true''
Springer correspondence, as found in, say,~\cite[Section~13.3]{C}.  The
Springer correspondences for these three groups are given below.

For algebraic groups of types $\rA_2$ and $\rB_2$, both the unipotent classes and the
representations of the Weyl group are labelled by partitions (or pairs of
partitions).  In $G_2$, unipotent classes are named by their Bala--Carter
labels, and representations have been named using Carter'
s notation~\cite{C}.  To identify representations in these notations with
our $\chi_i$'s, it suffices to compute $b_\chi$ for all of them. In types
$\rA_2$ and $\rB_2$, this can be done with~\cite[Propositions~11.4.1
and~11.4.2]{C}, while for $\rG_2$, the required information is included in
the notation (we have $b_{\phi_{d,n}} = n$).  In types $\rB_2$ and $\rG_2$,
one has a choice of which representation to label as $\chi_r$ and which as
$\chi_r'$; we have made the choice that agrees with Theorem~\ref{thm:main}.
The corresponding sets $\cS$ are
$\{\chi_0,\chi_1,\epsilon\}$, $\{\chi_0,\chi_1,\chi'_2,\epsilon\}$,
$\{\chi_0,\chi_1,\chi_2,\chi'_3,\epsilon\}$, in type $\rA_2$, $\rB_2$,
$\rG_2$, respectively.
{\tiny
\[
\begin{array}[t]{c|c}
\multicolumn{2}{c}{\text{Type $\rA_2$}} \\
\text{\it class} & \text{\it reps.} \\ \hline
{}[3] & [3] = \chi_0 \\
{}[2,1] & [2,1] = \chi_1 \\
{}[1^3] & [1^3] = \epsilon
\end{array}
\qquad
\begin{array}[t]{c|c}
\multicolumn{2}{c}{\text{Type $\rB_2$}} \\
\text{\it class} & \text{\it reps.} \\ \hline
{}[5] & ([2],\varnothing) = \chi_0 \\
{}[3,1^2] & ([1],[1]) = \chi_1;\ ([1^2],\varnothing) = \chi_2 \\
{}[2^2,1] & (\varnothing,[2]) = \chi'_2 \\
{}[1^5] & (\varnothing,[1^2]) = \epsilon
\end{array}
\qquad
\begin{array}[t]{c|c}
\multicolumn{2}{c}{\text{Type $\rG_2$}} \\
\text{\it class} & \text{\it reps.} \\ \hline
G_2 & \phi_{1,0} = \chi_0 \\
G_2(a_1) & \phi_{2,1} = \chi_1;\ \phi_{1,3}' = \chi_3 \\
\tilde A_1 & \phi_{2,2} = \chi_2 \\
A_1 & \phi_{1,3}'' = \chi'_3 \\
1 & \phi_{1,6} = \epsilon
\end{array}
\]}

\subsection{Bad characteristics}
It follows from \cite{Sp0} that for connected algebraic groups of both types 
$\rB_2$ in characteristic $2$ and $\rG_2$ in characteristic $3$, the partially 
ordered set of unipotent classes has the form
of second diagram in Table~$1$ of the present paper (the one in which the two
classes
$\cC_{\chi_r}$ and $\cC_{\chi_r'}$ cannot be compared). Then, the explicit
computations of the Springer correspondences in these two cases, provided
by \cite{LS} and \cite{Sp1}, show that they coincide with 
those given by Theorem~\ref{thm:main}.
The same references show that the Springer correspondence for $\rG_2$ in
characteristic $2$ coincides with the one in good characteristic.
Hence the Springer correspondences associated to reductive groups of rank
$2$ in bad characteristic are recovered by our theorem.

\subsection{Non-connected algebraic groups}
We will consider below two Springer correspondences which occur ``in nature,''
associated to the group $\rB_2$ and to two types of disconnected
groups. 

We assume here that the characteristic equals $2$.
Let $\GG(5)$ be the group defined as the extension
of $\GL(5)$ by the order $2$ automorphism of the diagram. There are $5$
unipotent classes of $\GG(5)$ which are not contained in $\GL(5)$, and
the partial order is here of the kind of the second diagram of 
Table~$1$ of section~\ref{sect:dihedral} of the present paper.
Then Table $2$ on page $314$ of \cite{MS}
shows that the Springer correspondence for this group coincides also with
the
one given by Theorem~\ref{thm:main}. 
We have $\cS=\{\chi_0,\chi_1,\chi_2,\chi'_2,\epsilon\}$.

Let $\GO_6$ be the general orthogonal group over an algebraically closed
field of characteristic $2$, the extension of $\SO_6$ by a non-trivial
graph automorphism of order $2$.
There are $4$ unipotent classes which are not
contained in $\SO_6$, the corresponding partial order between them is of
the
form of the third diagram of Table~$1$ of section~\ref{sect:dihedral}
of the present paper, and Table $4$
on page $318$ of \cite{MS} shows that the restricted Springer correspondence
for this group coincides also with the one deduced from 
Theorem~\ref{thm:main}.
We have $\cS=\{\chi_0,\chi_1,\chi'_2,\epsilon\}$.

The Springer correspondences for the two above groups are given below,
where unipotent classes are labelled as in \cite{MS}.
{\tiny
\[
\begin{array}[t]{c|c}
\multicolumn{2}{c}{\text{ Group $\GG(5)$}} \\
\text{\it class} & \text{\it reps.} \\ \hline
{}[5] & ([2],\varnothing) = \chi_0 \\
{}[3,1^2] & ([1],[1]) = \chi_1 \\
{}[3,1^2_0] & ([1^2],\varnothing) =\chi_2\\
{}[2^2,1] & (\varnothing,[2])=\chi'_2\\
{} [1^5] & (\varnothing,[1^2])= \epsilon
\end{array}
\qquad\qquad
\begin{array}[t]{c|c}
\multicolumn{2}{c}{\text{ Group $\GO_6$}} \\
\text{\it class} & \text{\it reps.} \\ \hline
{}[6] & ([2],\varnothing) = \chi_0 \\
{}[4,1^2] & ([1],[1]) = \chi_1;\ ([1^2],\varnothing) = \chi_2 \\
{}[2^3] & (\varnothing,[2]) = \chi'_2 \\
{}[2,1^4] & (\varnothing,[1^2]) = \epsilon.
\end{array}
\]}

\section{Proof of Theorem~\ref{thm:main}}
\label{sect:proof}

The proof of Theorem~\ref{thm:main} is quite straightforward: we simply
attempt to carry out the Lusztig--Shoji algorithm.  If the attempt
succeeds, and if the solution satisfies the conditions given in
Section~\ref{sect:intro}, then we will have produced a Springer
correspondence.  In fact, we will find that if the unipotent classes are
not as described in the theorem, then the attempt to calculate
$P_{\chi,\cC}$ fails (there is no solution that is a matrix with entries
which are polynomials with nonnegative integer coefficients).

We define
\[
f_k = \max \{ i \mid \text{$1 \le i \le m/2$ and $\supp i \ge \cC_k$} \}.
\]
In the case that $m$ is even, $r' \in \cS$, and $r \notin \cS$, the theorem states that the inequality $f_k - f_{k+1} \le d_{k+1} - d_k$ must hold.  We actually prove below that, irrespective of whether this inequality holds or not, there is a system of Green functions in which the entries of $P$ are Laurent polynomials with nonnegative integer coefficients, and the entries of $\Lambda$ are polynomials with integer coefficients.

The formula for $P$ given in the previous section holds in this more general setting.  The restriction $f_k - f_{k+1} \le d_{k+1} - d_k$ is an immediate consequence of requiring $P_{\chi,\chi'}$ to be a polynomial divisible by $q^{a_{\supp \chi}}$.

\subsection{Proof outline}

We will study the unipotent classes in increasing order, starting with the trivial class.  For each unipotent class $\cC$, we carry out the following four steps:

\subsubsection{Compute $Y^{\cC}_{\chi,\chi'}$, using known formulas for lower classes}
We assume that we are in the case when there is exactly one highest class, say 
$\cD$, below $\cC$. 
From the definition of $Y^\cC_{\chi,\chi'}$, we have
\begin{align}
Y^{\cC}_{\chi,\chi'} &= \gamma^{-1}\left(\omega_{\chi,\chi'} - \sum_{\cC' < \cC}
P_{\chi, \cC'} \Lambda_{\cC'} P_{\chi',\cC'}^t\right) \notag \\
&= Y^{\cD}_{\chi,\chi'} - \gamma^{-1} P_{\chi,\cD} \Lambda_{\cD} P_{\chi',\cD}^t \notag \\
\intertext{In most cases, the matrices $P_{\chi,\cD}$ and $P_{\chi',\cD}$ each have a single nonzero entry (see~\ref{subsubsect:PiC}), so the above equation reduces to}
Y^{\cC}_{\chi,\chi'} &= Y^{\cD}_{\chi,\chi'} - q^{-2a_{\cD}} P_{\chi,S(\chi)} Y^{\cD}_{S(\chi), S(\chi')}  P_{\chi',S(\chi')} \label{eqn:Ygen}
\end{align}
for suitable characters $S(\chi), S(\chi') \in \cD$.  Once we obtain this formula, the formula for $\Lambda_\cC$ follows immediately.

\subsubsection{Set up equations for finding $P_{\chi,\cC}$}
\label{subsubsect:PiC}
From~\eqref{eqn:P-alg} and~\eqref{eqn:Lambda-Y}, we have
\[
P_{\chi,\cC} Y^{\cC}_{\cC,\cC} = q^{a_\cC} Y^{\cC}_{\chi,\cC}
\]
Selecting one entry from both sides of this equation, we have
\begin{equation}\label{eqn:Pgen}
\sum_{\chi'' \in \cC} P_{\chi,\chi''} Y^{\cC}_{\chi'',\chi'}
= q^{a_\cC} Y^{\cC}_{\chi,\chi'}
\end{equation}
The details of the argument from here on vary, but the main ideas are as follows:
\begin{itemize}
\item Since $P_{\chi,\chi''}$ is a polynomial with nonnegative coefficients, $P_{\chi,\chi''}|_{q=1}$ is a nonnegative integer.  Moreover, this integer is $0$ if and only if $P_{\chi,\chi''}$ is the zero polynomial.
\item Evaluating the entire equation~\eqref{eqn:Pgen} at $q = 1$, with judicious choices of $\chi'$, may allow one to deduce strong constraints on the $\chi''$ with $P_{\chi,\chi''} \ne 0$.
\item If $P_{\chi,\chi''}|_{q=1} = 1$, then $P_{\chi,\chi''}$ is a power of $q$.
\end{itemize}
Typically, one shows that there is a unique $\chi'' \in \cC$ such that $P_{\chi,\chi''} \ne 0$.  Let $S(\chi)$ denote this $\chi''$.  Next, one shows that $P_{\chi,S(\chi)}$ is a power of $q$, say $q^b$.  By reconsidering~\eqref{eqn:Pgen}, one may find an equation relating $b$ to $S(\chi)$.

\subsubsection{Determine the members of $\cC$}
To prove that $\chi \in \cC$, we assume that in fact $\supp \chi > \cC$, and we try to use the work of the previous step to compute $P_{\chi,\cC}$.  (We will have already explicitly determined the members of all lower classes, so we know that $\supp \chi \not< \cC$.)  This calculation leads to a contradiction, from which we deduce that $\chi \in \cC$.

\subsubsection{Give a formula for $P_{\chi,\cC}$}
Knowing the members of $\cC$ often enables us to obtain further conditions on those $\chi$ that actually have $\supp \chi > \cC$.  Using these conditions, one can determine the values of $b$ and $S(\chi)$ (as defined in~\ref{subsubsect:PiC}), and hence obtain a formula for $P_{\chi,\cC}$.

\subsection{The trivial unipotent class}

For the smallest unipotent class, we cannot use~\eqref{eqn:Ygen} to compute $Y^{\cC_{\epsilon}}$.  Rather, we must use~\eqref{eqn:Lambda-alg} and~\eqref{eqn:Ydefn} directly:
\[
\Lambda_{\epsilon,\epsilon} = q^{-2m}(\omega_{\epsilon,\epsilon}) = 1
\qquad\text{and}\qquad
Y^{\cC_\epsilon}_{\chi,\chi'} = \gamma^{-1}\omega_{\chi,\chi'}.
\]
Finally, from~\eqref{eqn:P-alg},
\[
P_{\chi,\epsilon} = q^{-m}(\omega_{\chi,\epsilon})\cdot 1
= q^{-m}q^{m} R(\chi \otimes \epsilon \otimes \epsilon) = R(\chi).
\]

\subsection{The classes with Springer representation $r$ or $r'$}

We will carry out the appropriate calculations for $\cC_{\chi_r'}$.  (Recall that we have assumed that if exactly one of $\chi_r'$ and $\chi_r$ is in $\cS$, then in fact $\chi_r' \in \cS$.)  If $\chi_r \in \cS$ as well, the calculations for $\cC_{\chi_r}$ are identical.

From a similar argument as in~\eqref{eqn:Ygen}, we have
\[
Y^{\cC_{\chi_r'}}_{\chi,\chi'} = \gamma^{-1}\omega_{\chi,\chi'} -
q^{-2m} R(\chi) \gamma^{-1} \omega_{\epsilon,\epsilon}R(\chi')
= (\omega_{\chi,\chi'} - R(\chi)R(\chi'))/\gamma. 
\]
In the following table, we carry out this calculation in all cases.  Throughout, we assume that $1 \le i, j < m/2$.
{\tiny
\[
\begin{aligned}
Y^{\cC_{\chi_r'}}_{00} &= (q^{2m} - q^0q^0)/\gamma = q^m+1 \\
Y^{\cC_{\chi_r'}}_{0j} &= (q^{m+j}+q^{2m-j} - q^0(q^j + q^{m-j}))/\gamma
\\
                    &= q^{m-j} + q^j \\
Y^{\cC_{\chi_r'}}_{0r} &- (q^{3r} - q^0q^r)/\gamma = q^r \\
Y^{\cC_{\chi_r'}}_{ir} &= (q^{3r-i} + q^{3r+i} - (q^i+q^{m-i})q^r)/\gamma
\\
                    &= q^{r+i} \\
\end{aligned}
\qquad
\begin{aligned}    
Y^{\cC_{\chi_r'}}_{ij} &= (q^{m+|i-j|} + q^{m+i+j}+q^{2m-i-j}+q^{2m-|i-j|}
\\
                    &\quad - (q^i+q^{m-i})(q^j+q^{m-j}))/\gamma \\
                    &= (q^{2m-|i-j|}+q^{m+|i-j|}-q^{m-i+j}-q^{m+i-j}\\ 
                    &\quad + q^{m+i+j} - q^{i+j})/\gamma 
                    = q^{m-|i-j|} + q^{i+j} \\ 
Y^{\cC_{\chi_r'}}_{rr} &= (q^{2m} - q^rq^r)/\gamma = q^{2r} \\
Y^{\cC_{\chi_r'}}_{rr'}&= (q^m - q^rq^r)/\gamma = 0
\end{aligned}
\]}
By a fortunate coincidence, we can combine many of these formulas.  Below, we permit $0 \le i, j \le m/2$:
\[
Y^{\cC_{\chi_r'}}_{ij} =
\begin{cases}
q^{m-|i-j|} + q^{i+j} & \text{if $i, j < m/2$,} \\
q^{i+j}               & \text{if $i = r$ or $j = r$.}
\end{cases}
\]
Next,~\eqref{eqn:Pgen} gives
\[
P_{ir'}\cdot q^{2r} = q^{r}Y^{\cC_{\chi_r'}}_{ir'} \quad\text{if $i < r$},
\]
from which it follows that $P_{ir'} = q^i$.

\subsection{General arguments for the class $\cC_N$}

\subsubsection{Formula for $Y^{(N)}$}
From~\eqref{eqn:Ygen}, we have
\begin{align*}
Y^{(N)}_{ij} &= Y^{\cC_{\chi_r'}}_{ij} -
(1-\iota)q^{-2r}P_{ir'}Y^{\cC_{\chi_r'}}_{r'r'}P_{jr'} \\
&=
\begin{cases}
q^{m-|i-j|} + q^{i+j} - (1-\iota)q^{-2r}q^i(q^{2r})q^j & \text{if $i, j <
r$,} \\
q^{i+j} - (1-\iota)\cdot 0 & \text{if $i = r$ or $j = r$}
\end{cases}
\end{align*}

We can now set up the various versions of~\eqref{eqn:Pgen} that we
will require.  Let
\[
\delta =
\begin{cases}
0 & \text{if $r \notin \cC_N$,} \\
1 & \text{if $r \in \cC_N$,}
\end{cases}
\qquad
\delta' =
\begin{cases}
0 & \text{if $r' \notin \cC_N$,} \\
1 & \text{if $r' \in \cC_N$.}
\end{cases}
\]
First, suppose $i < m/2$.  
For all $j \in \cC_N$, $j < r$, we have
\begin{equation}\label{eqn:PNdd}
\sum_{\substack{l \in \cC_N \\ l < r}} P_{il}  (q^{m-|l-j|} + \iota q^{l+j}) + \delta P_{ir} q^{j+r} + \delta' P_{ir'} q^{j+r} = q^{d_N}(q^{m-|i-j|} + \iota q^{i+j}).
\end{equation}
If, in fact, $\delta = 1$, so that $r \in \cC_N$, then we also have
\begin{equation}\label{eqn:PNdf}
\sum_{\substack{l \in \cC_N \\ l < r}} P_{il}  q^{l+r} + P_{ir} q^{2r} + \delta' P_{ir'}\cdot 0 = q^{d_N}q^{i+r}.
\end{equation}
An analogue of this holds if $\delta' = 1$.

On the other hand, if $\supp r > \cC_N$, then we have
\begin{equation}\label{eqn:PNfd}
\sum_{\substack{l \in \cC_N \\ l < r}} P_{rl}  (q^{m-|l-j|} + \iota q^{l+j}) +\delta' P_{rr'} q^{j+r} = q^{d_N}q^{j+r},
\end{equation}
and, additionally, if $\delta' = 1$,
\begin{equation}\label{eqn:PNff}
\sum_{\substack{l \in \cC_N \\ l < r}} P_{rl}  q^{l+r} +  P_{rr'}q^{2r} = q^{d_N}\cdot 0.
\end{equation}

\subsubsection{Calculation of $P_{i,\cC_N}$}
\label{subsubsect:PNgen}

Let us assume now that we have shown that the left-hand side
of~\eqref{eqn:PNdd} reduces to a single nonzero term, indexed by some $l
\in \cC_N$, and that, moreover, the coefficient $P_{il}$ is a power of $q$,
say $P_{il} = q^b$.  (This will require slightly different arguments
depending on $\iota$.)
So~\eqref{eqn:PNdd} reduces to
\begin{align}
q^b(q^{m-|l-j|} + \iota q^{l+j}) &= q^{d_N}(q^{m-|i-j|} + \iota q^{i+j})
\notag \\
q^{b+m-|l-j|} + \iota q^{b+l+j} &= q^{d_N+m-|i-j|} + \iota q^{d_N+i+j}
\label{eqn:PNddS}
\end{align}
for all $j \in \cC_N$, $j < m/2$.  In particular, for $j = d_N$, the preceding equation becomes
\begin{equation}\label{eqn:PNddS2}
q^{b + m - l + d_N} + \iota q^{b+l+d_N} = q^{m - |i-d_N| + d_N} + \iota q^{i + 2d_N}
\end{equation}
If $\iota = 1$, we must decide which term on the left corresponds to each term on the right.  We do this by comparing exponents.  Since $l < m/2$, we evidently have $b+m -l + d_N >b+ l + d_N$.  Similarly,
\[
m - |i-d_N| + d_N =
\left\{
\begin{array}{@{}ll@{\qquad}ll}
m + i        &\text{(if $i < d_N$)}&> i + 2d_N &\text{since $d_N < m/2$,} \\
m - i + 2d_N &\text{(if $i > d_N$)}&> i + 2d_N &\text{since $i < m/2$.}
\end{array}
\right.
\]
Therefore,~\eqref{eqn:PNddS2} implies that
\[
q^{b+m-l+d_N} = q^{m-|i-d_N|+d_N}
\qquad\text{and, if $\iota = 1$,}\qquad
q^{b+l+d_N} = q^{i+2d_N},
\]
and hence
\begin{equation}\label{eqn:PNb}
b = l - |i-d_N|
\qquad\text{and, if $\iota = 1$,}\qquad
b = i + d_N - l.
\end{equation}

\subsection{The class $\cC_N$ when $\iota = 1$}

We will begin by showing that if $m$ is even, then $r, r' \in \cC_N$.  Suppose, for instance, that $\supp r > \cC_N$ instead.  Then,~\eqref{eqn:PNfd} evaluated at $q = 1$ can hold only if $P_{rl} = 0$ for all $l < r$, and $\delta' P_{rr'}|_{q=1} = 1$.  In particular, it is necessarily the case that $\delta' = 1$.  But now these values for $P_{rl}$ and $P_{rr'}$ violate~\eqref{eqn:PNff}.  So it must be that $r \in \cC_N$.  The same argument, with the roles of $r$ and $r'$ reversed, shows that $r' \in \cC_N$.

Now, suppose that $i < m/2$.  If $m$ is even, then
comparing~\eqref{eqn:PNdf} with the analogous equation in which $r$ and
$r'$ are exchanged (recall that $\delta = \delta' = 1$) shows that $P_{ir}
= P_{ir'}$.  Suppose they are both nonzero.  Then,
evaluating~\eqref{eqn:PNdd} at $q = 1$ shows that in fact $P_{ir}|_{q=1} =
P_{ir'}|_{q=1} = 1$ (since we are in the case $\iota = 1$), and,
furthermore, that $P_{il} = 0$ for all $l < r$. $P_{ir}$ and $P_{ir'}$ must
be powers of $q$; say $P_{ir} = P_{ir'} = q^b$. Now~\eqref{eqn:PNdd}
reduces to
\[
2q^bq^{j+r} = q^{d_N+m-|i-j|} + q^{d_N+i+j}.
\]
In order for there to be a $b$ satisfying this equation for all $j \in \cC_N$, $j < r$, it must be that $d_N+m-|i-j| = d_N+i+j$ for all such $j$.  Therefore,
\[
m = i+j+|i-j| = 2\max\{i,j\},
\]
in contradiction to the fact that $i,j < m/2$.  We conclude that $P_{ir} = P_{ir'} = 0$.

We now drop the assumption that $m$ is even, and we return to~\eqref{eqn:PNdd} with the knowledge that the last two terms of the left-hand side vanish in all cases.  Again considering that equation at $q = 1$, we see that there is a unique $l \in \cC_N$, $l < m/2$, such that $P_{il}$ is nonzero.  For that $l$, we know that $P_{il}|_{q=1} = 1$, so $P_{il}$ must be a power of $q$, say $q^b$.  We are therefore in the situation of Section~\ref{subsubsect:PNgen}, and~\eqref{eqn:PNb} holds.  

\subsubsection{Members of $\cC_N$}

We have already shown at the beginning of the section that $r, r' \in \cC_N$.  Now, suppose that $d_N < i < m/2$.  If $i \notin \cC_N$, then we can calculate $P_{il}$ as above, and the formulas in~\eqref{eqn:PNb} say
\[
b = l - i + d_N = i + d_N - l.
\]
But this implies that $i = l$, which is absurd.  Therefore, if $d_N < i < m/2$, it must be that $i \in \cC_N$.  We conclude that
\[
\cC_N =
\begin{cases}
\{d_N, d_N+1, \ldots, (m-1)/2 \} & \text{if $m$ is odd,} \\
\{d_N, d_N+1, \ldots, r-1, r, r'\} & \text{if $m$ is even.}
\end{cases}
\]
In particular, $f_N = \lfloor\frac{m-1}{2}\rfloor$ in all cases.

\subsubsection{Calculation of $P_{i,\cC_N}$}
It remains to consider the case $i < d_N$.  We must determine the unique
$l$ such that $P_{il} \ne 0$, and then we must give a formula for that
$P_{il}$.  The formulas~\eqref{eqn:PNb} now say
\[
b = l - d_N + i = i + d_N - l.
\]
These imply that $l = d_N$, and hence that $b = i$.  Thus, if $i < d_N$ and $j \in \cC_N$, we have
\[
P_{ij} =
\begin{cases}
q^i & \text{if $j = d_N$,} \\
0   & \text{otherwise.}
\end{cases}
\]

\subsection{The class $\cC_N$ when $\iota = 0$}

In this case, we obviously have $\delta' = 0$ (as $r'$ is the Springer representation of a smaller class).  Moreover, since $q^{j+r} = q^{m-|r-j|}$, the equation~\eqref{eqn:PNdd} can be rewritten in the more concise form
\[
\sum_{l \in \cC_N} P_{il} q^{m-|l-j|} = q^{d_N}q^{m - |i-j|}.
\]
Evaluating this at $q = 1$ shows that there is a unique $l \in \cC_N$ such that $P_{il} \ne 0$, and that for that $l$, $P_{il}$ is a power of $q$.  As before, the arguments of Section~\ref{subsubsect:PNgen} hold.  There is now only one term on each side of~\eqref{eqn:PNddS}, and we conclude that $b + m - |l- j| = d_N + m -|i-j|$, or
\begin{equation}\label{eqn:PNbi0}
b = d_N - |i - j| + |l-j| \qquad\text{for all $j \in \cC_N$}
\end{equation}

\subsubsection{Members of $\cC_N$}

Suppose $d_N < i < f_N$.  If $\supp i > \cC_N$, then we could use the above formula to compute $P_{i,\cC_N}$.  Putting $j = d_N$ and $j = f_N$ respectively in~\eqref{eqn:PNbi0}, we find that
\begin{align*}
b &= d_N - (i - d_N) + (l - d_N) = d_N - i + l \\
b &= d_N - (f_N - i) + (f_N - l) = d_N + i - l
\end{align*}
This equations together imply that $i = l$, which is absurd.  Therefore,
\[
\cC_N = \{d_N, d_N + 1, \ldots, f_N\}.
\]

\subsubsection{Calculation of $P_{i,\cC_N}$}

We now know that either $i < d_N$ or $i > f_N$.  Suppose $i < d_N$.  As
above, putting $j = f_N$ into~\eqref{eqn:PNbi0} shows that $b = d_N + i -
l$.  On the other hand, the first formula in~\eqref{eqn:PNb} simplifies to
$b = l - d_N + i$.  These two formulas together imply that $l = d_N$ and $b
= i$. 

Now, suppose that $i > f_N$.  This time, putting $j = f_N$
into~\eqref{eqn:PNbi0} gives 
\[
b = d_N - (i - f_N) + (f_N - l) = d_N -i -l + 2f_N.
\]
On the other hand,~\eqref{eqn:PNb} now says that $b = l - i + d_N$.  These
two together imply that $l = f_N$, and that $b = d_N + f_N - i$.  We
conclude that if $i \notin \cC_N$ and $j \in \cC_N$, then
\[
P_{ij} =
\begin{cases}
q^i & \text{if $i < d_N$ and $j = d_N$,} \\
q^{d_N + f_N - i} & \text{if $i > f_N$ and $j = f_N$,} \\
0 & \text{otherwise.}
\end{cases}
\]

\subsection{The class $\cC_N$ when $\iota = -1$}

The calculations required in this case are nearly identical to those to be
done in Section~\ref{subsect:Ck}.  Indeed, by introducing the notation
$d_{N+1} = f_{N+1} = m$, we see that~\eqref{eqn:PNdd} is the same as the
first equation in~\eqref{eqn:Pk} below, with $k = N$.  Some of the cases
considered below are inapplicable here, since the inequalities $i >
f_{N+1}$ and $j > f_{N+1}$ cannot occur.  To determine $\cC_N$ and
calculate $P_{i,\cC_N}$, however, we simply quote the appropriate portions
of the results from the following section.  We have 
\[
\cC_N = \{d_N, d_N + 1, \ldots, r-1\}
\]
and, if $i < d_N$ and $j \in \cC_N$,
\[
P_{ij} =
\begin{cases}
q^i & \text{if $j = d_N$,} \\
0 & \text{otherwise.}
\end{cases}
\]

\subsection{Larger unipotent classes}
\label{subsect:Ck}

\subsubsection{Formula for $Y^{(k)}$}
From~\eqref{eqn:Ygen}, we have
\[
Y^{(k)}_{\chi,\chi'} = Y^{(k+1)}_{\chi,\chi'} - q^{-2d_{k+1}}
P_{\chi,S(\chi)} Y^{(k+1)}_{S(\chi), S(\chi')} P_{\chi', S(\chi')}.
\]
We will first treat the case $k = N-1$.  If $i, j < d_N$, we have
\begin{align*}
Y^{(N-1)}_{ij} &= q^{m-|i-j|} + \iota q^{i+j} - q^{-2d_N} q^i (q^m + \iota
q^{2d_N}) q^j\\
&= q^{m - |i-j|} - q^{m + i + j - 2d_N}.
\end{align*}
Next, if $i < d_N$ and $f_N < j$, then $\iota = 0$, and we have
\[
Y^{(N-1)}_{ij} = q^{m-(j-i)} - q^{-2d_N} q^i
(q^{m-(f_N-d_N)}) q^{d_N+f_N-j} = 0.
\]
Finally, if $i, j > f_N$, then again $\iota = 0$, and
\begin{align*}
Y^{(N-1)}_{ij} &= q^{m-|i-j|} - q^{-2d_N} q^{d_N+f_N-i}(q^m)q^{d_N+f_N-j}
\\
&= q^{m-|i-j|} - q^{m-i-j+2f_N}.
\end{align*}
Next, we carry out analogous calculations for $Y^{(k)}$,
assuming that the formulas for $Y^{(k+1)}$ and the various $P_{ij}$ for $j
\in \cC_{k+1}$ are known.  If $i, j < d_{k+1}$, then
\begin{align*}
Y^{(k)}_{ij} &= q^{m-|i-j|} - q^{m+i+j-2d_{k+2}} - q^{-2d_{k+1}}q^i (q^m -
q^{m + 2d_{k+1} - 2d_{k+2}})q ^j \\
&= q^{m-|i-j|} - q^{m + i + j - 2d_{k+1}}.
\end{align*}
Next, if $i < d_{k+1}$ and $j > f_{k+1}$, then
\[
Y^{(k)}_{ij} = 0 - q^{-2d_{k+1}} q^i \cdot 0 = 0.
\]
(It should be noted that the $0$ in the second term may arise
in two different ways: if $f_{k+1} \in \cC_{k+1}$, then
$Y^{(k+1)}_{d_{k+1},f_{k+1}} = 0$, but on the other hand, if $f_{k+1}
\notin \cC_{k+1}$, then $P_{j,\cC_{k+1}} = 0$.)  Finally, if $i,j >
f_{k+1}$, then if $f_{k+1} \notin \cC_{k+1}$ (which implies $f_{k+1} =
f_{k+2}$), we have
\[
Y^{(k)}_{ij} = q^{m-|i-j|} - q^{m-i-j+2f_{k+2}} - q^{-2d_{k+1}} \cdot 0
= q^{m-|i-j|} - q^{m-i-j+2f_{k+1}}.
\]
On the other hand, if $f_{k+1} \in \cC_{k+1}$, then
\begin{align*}
Y^{(k)}_{ij} &= q^{m-|i-j|} - q^{m-i-j+2f_{k+2}} \\
&\quad - q^{-2d_{k+1}}
q^{d_{k+1} + f_{k+1}-i}(q^{m} -
q^{m-2f_{k+1}+2f_{k+2}})q^{d_{k+1}+f_{k+1}-j} \\
&= q^{m-|i-j|} - q^{m -i -j +2f_{k+1}}
\end{align*}

\subsubsection{Calculation of $P_{i,\cC_k}$}

Suppose $\supp i > \cC_k$ and $j \in \cC_k$.  From the description of
classes below $\cC_k$, we know that $i, j \notin \{d_{k+1}, d_{k+1}+1,
\ldots, f_{k+1}\}$.  There are therefore four versions of~\eqref{eqn:Pgen}
to consider, depending on whether $i < d_{k+1}$ or $i > f_{k+1}$, and
whether $j < d_{k+1}$ or $j > f_{k+1}$.

Note that if $j < d_{k+1}$, then all terms on the left-hand side
of~\eqref{eqn:Pgen} with $l > f_{k+1}$ vanish, since $Y^{(k)}_{jl} = 0$
for those terms.  Likewise, if $j > f_{k+1}$, then all terms with $l <
d_{k+1}$ vanish.
\begin{gather}
\sum_{\substack{l \in \cC_k\\ l < d_{k+1}}}
P_{il} (q^{m - |l-j|} - q^{m +l+j-2d_{k+1}})
= q^{d_k}(q^{m - |i-j|} - q^{m+i+j-2d_{k+1}}) \;\text{if $i,j <
d_{k+1}$,}  \notag\\
\begin{gathered}
\sum_{\substack{l \in \cC_k\\ l > f_{k+1}}}
P_{il} (q^{m - |l-j|} - q^{m -l-j+2f_{k+1}})
= 0 \qquad\text{if $i <
d_{k+1}$, $j > f_{k+1}$,} \\
\sum_{\substack{l \in \cC_k\\ l < d_{k+1}}}
P_{il} (q^{m - |l-j|} - q^{m +l+j-2d_{k+1}})
= 0 \qquad\text{if $i >
f_{k+1}$, $j < d_{k+1}$,}
\end{gathered} \label{eqn:Pk} \\
\sum_{\substack{l \in \cC_k\\ l > f_{k+1}}}
P_{il} (q^{m - |l-j|} - q^{m -l-j+2f_{k+1}})
= q^{d_k}(q^{m - |i-j|} - q^{m-i-j+2f_{k+1}}) \;\text{if $i, j >
f_{k+1}$.} \notag
\end{gather}

Now, evaluating these equations at $q = 1$ is useless---both sides
vanish---but if we differentiate with respect to $q$ first, we obtain
useful information.  Starting from the first equation in~\eqref{eqn:Pk},
we obtain
\begin{multline*}
\sum_{\substack{l \in \cC_k\\ l < d_{k+1}}} \frac{dP_{il}}{dq} (q^{m -
|l-j|} - q^{m +l+j-2d_{k+1}})
\\
+ \sum_{\substack{l \in \cC_k\\ l < d_{k+1}}} P_{il}((m-|l-j|)q^{m-|l-j|-1}
- (m
+l+j-2d_{k+1})q^{m+l+j-2d_{k+1}-1}) \\
= (d_k+m-|i-j|)q^{d_k+m-|i-j|-1} \\
- (d_k+m+i+j-2d_{k+1})q^{d_k+m+i+j-2d_{k+1}-1}.
\end{multline*}
Evalutating at $q = 1$, we obtain
\[
\sum_{\substack{l \in \cC_k\\ l < d_{k+1}}} P_{il}|_{q=1} (2d_{k+1}-l - j -
|l-j|)
= 2d_{k+1} - i - j -|i-j|.
\]
Now, $l+j+|l-j| = 2\max\{l,j\}$, and likewise for the right-hand side, so we have
\begin{equation}\label{eqn:Pkdd}
\sum_{\substack{l \in \cC_k\\ l < d_{k+1}}} P_{il}|_{q=1} (d_{k+1}-
\max\{l,j\})
= d_{k+1} - \max\{i,j\} \qquad\text{if $i, j < d_{k+1}$.}
\end{equation}
Analogous calculations starting from the other equations in~\eqref{eqn:Pk}
yield
\begin{gather}
\sum_{\substack{l \in \cC_k\\ l > f_{k+1}}} P_{il}|_{q=1}
(\min\{l,j\} - f_{k+1}) 
= 0 \qquad\text{if $i < d_{k+1}$, $j > f_{k+1}$,} \label{eqn:Pkdf} \\
\sum_{\substack{l \in \cC_k\\ l < d_{k+1}}} P_{il}|_{q=1}
(d_{k+1} - \max\{l,j\}) 
= 0 \qquad\text{if $i > f_{k+1}$, $j < d_{k+1}$,} \label{eqn:Pkfd} \\
\sum_{\substack{l \in \cC_k\\ l > f_{k+1}}} P_{il}|_{q=1}
(\min\{l,j\} - f_{k+1}) 
= \min\{i,j\} - f_{k+1} \qquad\text{if $i, j > f_{k+1}$.}
\label{eqn:Pkff} 
\end{gather}
Note that $\min\{l,j\} - f_{k+1} > 0$ for all $l > f_{k+1}$. 
Thus,~\eqref{eqn:Pkdf} implies that if $i < d_{k+1}$, then $P_{il}|_{q=1} =
0$ for all $l > f_{k+1}$, and hence $P_{il} = 0$ for all $l > f_{k+1}$.
Similarly,~\eqref{eqn:Pkfd} implies that if $i > f_{k+1}$, then
$P_{il} = 0$ for all $l < d_{k+1}$.

Now, let $g = \max\{l \in \cC_k \mid l < d_{k+1} \}$, and let $h = \min\{l
\in \cC_k \mid l > f_{k+1}\}$.  (Of course, $h$ might not exist, if
$\cC_k$ contains no members larger than $f_{k+1}$.)  Putting $j = g$
into~\eqref{eqn:Pkdd} and $j = h$ into~\eqref{eqn:Pkff}, we find that
\begin{align}
\sum_{\substack{l \in \cC_k\\ l < d_{k+1}}} P_{il}|_{q=1} (d_{k+1}- g)
= d_{k+1} - \max\{i,g\} \qquad\text{if $i, j < d_{k+1}$,} \label{eqn:Pkdu}
\\
\sum_{\substack{l \in \cC_k\\ l > f_{k+1}}} P_{il}|_{q=1}
(h - f_{k+1}) 
= \min\{i,h\} - f_{k+1} \qquad\text{if $i, j > f_{k+1}$.} \label{eqn:Pkfu}
\end{align}
Note that if $g < i < d_{k+1}$, then no positive integers $P_{il}|_{q=1}$
satisfy~\eqref{eqn:Pkdu}, since $d_{k+1} - g > d_{k+1} - i > 0$.  Thus, if
$i < d_{k+1}$, then it is necessarily the case that $i < g$ as well. 
Now,~\eqref{eqn:Pkdu} says that
\[
\sum_{\substack{l \in \cC_k\\ l < d_{k+1}}} P_{il}|_{q=1} (d_{k+1}- g)
= d_{k+1} - g,
\]
so as usual, there is a unique $l$ such that $P_{il}$ is nonzero, and for
that $l$, $P_{il}$ is a power of $q$, say $q^b$.  The first equation
in~\eqref{eqn:Pk} now reduces to
\[
q^b (q^{m - |l-j|} - q^{m +l+j-2d_{k+1}})
= q^{d_k}(q^{m - |i-j|} - q^{m+i+j-2d_{k+1}}).
\]
By matching exponents of corresponding terms, we see that $b+m -|l-j| =
d_k +m -|i-j|$, and $b+m+l+j-2d_{k+1} = d_k+m+i+j-2d_{k+1}$.  Therefore,
\begin{equation}\label{eqn:Pkbd}
b = d_k + |l - j| - |i - j|
\qquad\text{and}\qquad
b = d_k + i - l.
\end{equation}
Similarly, if $i > f_{k+1}$, it follows from~\eqref{eqn:Pkfu} that $i >
h$.  One then deduces that there is a unique nonzero $P_{il}$, and that it
is of the form $q^b$, where
\begin{equation}\label{eqn:Pkbf}
b = d_k + |l - j| - |i - j|
\qquad\text{and}\qquad
b = d_k - i + l.
\end{equation}

\subsubsection{Members of $\cC_k$}

We wish to show that if $d_k < i < d_{k+1}$, or if $f_{k+1} < i < f_k$,
then $i$ necessarily belongs to $\cC_k$.  As usual, we show this by trying
to calculate $P_{i,\cC_k}$ and deriving a contradiction.

Suppose that $d_k < i < d_{k+1}$.  Putting $j = d_k$ into~\eqref{eqn:Pkbd}
gives
\[
b = d_k + (l - d_k) - (i - d_k) = d_k + l - i
\qquad\text{and}\qquad
b = d_k + i - l.
\]
Together, these imply that $i = l$, which is absurd.  Similarly, if $f_{k+1} < i < f_k$, then we put $j = f_k$ in~\eqref{eqn:Pkbf} to find that
\[
b = d_k + (f_k - l) - (f_k - i) = d_k - l + i
\qquad\text{and}\qquad
b = d_k + i - l,
\]
again deducing that $i = l$.  Thus, if $d_k < i < d_{k+1}$ or $f_{k+1} < i < f_k$, it cannot be the case that $\supp i > \cC_k$.  We already know that $\supp i \not< \cC_k$, so we conclude that
\[
\cC_k = \{d_k, d_k+1, \ldots, d_{k+1}-1\} \cup \{f_{k+1}+1, f_{k+1}+2, \ldots, f_k\}.
\]

\subsubsection{Calculation of $P_{i,\cC_k}$}

We now know that either $i < d_k$ or $i > f_k$.  In the former case, we put $j = d_k$ into~\eqref{eqn:Pkbd} and find that
\[
b = d_k + (l - d_k) - (d_k - i) = i + l - d_k
\qquad\text{and}\qquad
b = d_k + i - l.
\]
Together, these imply that $b = i$ and $l = d_k$.  Similarly, if $i > f_k$, putting $j = f_k$ in~\eqref{eqn:Pkbf} gives
\[
b = d_k + (f_k - l) - (i - f_k) = d_k - l - i + 2f_k
\qquad\text{and}\qquad
b = d_k - i + l,
\]
which implies that $l = f_k$ and $b = d_k + f_k - i$.  We conclude that
\[
P_{ij} =
\begin{cases}
q^i & \text{if $i < d_k$ and $j = d_k$,} \\
q^{d_k+f_k-i} & \text{if $i > f_k$ and $j = f_k$,} \\
0 & \text{otherwise.}
\end{cases}
\]

\section{A preferred set of Springer representations} \label{section:
preferred Springer}

In this section, we will describe the construction of the set of Springer 
representations that we have obtained in \cite{AA}.

Recall first that any reflection subgroup of $I_2(m)$ is isomorphic to 
a group $I_2(d)$ where $d|m$. We set
\[s_i= \begin{bmatrix} 0 & \zeta^i \\ \zeta^{-i} & 0 \end{bmatrix},
\]
and for each divisor $d$ of $m$ (including $d=m$), we identify $I_2(d)$
with the subgroup of $\GL(V)$ generated by $s_0$ and $s_{m/d}$. In the
case when $m/d$ is even, we denote by $I'_2(d)$ the subgroup of $I_2(m)$ 
generated by $s_1$ and $s_{m/d+1}$. The group $I'_2(d)$ is isomorphic to
$I_2(d)$, but is not conjugate to it. Any reflection subgroup of $I_2(m)$
is conjugate to a $I_2(d)$ or to a $I'_2(d)$. 

We will denote the irreducible representations of $I_2(d)$ by
\[
\chi^{(d)}_0, \chi^{(d)}_1, \ldots, \chi^{(d)}_{\lfloor(d-1)/2\rfloor};
\qquad \epsilon^{(d)};
\qquad\text{and, if $d$ is even,} \qquad
\chi^{(d)}_{d/2}, \chi^{(d)\prime}_{d/2},
\]
with $\chi^{(m)}_i=\chi_i$, 
for $0\le i \le \lfloor(m-1)/2\rfloor$,
$\epsilon^{(m)}=\epsilon$,
and, if $m$ is even, $\chi^{(m)\prime}_{r}=\chi'_r$, where $r=m/2$.
 
We have $j_{I_2(d)}^{I_2(m)}(\chi_i^d)=
j_{I_2'(d)}^{I_2(m)}(\chi_i^d)=\chi_i$,
for $i=0,1$. On the other hand, we can fix a choice between $\chi_r$ and
$\chi'_r$ in order that the following holds
\[j_{I_2(d)}^{I_2(m)}(\epsilon^{(d)})=\begin{cases}
\chi_d & \text{if $d \ne m/2$,} \\
\chi_r' & \text{if $d = r=m/2$}
\end{cases}
\]
and
\[j_{I_2'(d)}^{I_2(m)}(\epsilon^{(d)})=\chi_d.\] 

We will define a subset $\cS_{\pref}$ of $\Irr(I_2(m))$ as follows. If $m=2$,
we put 
$\cS_{\pref}=\Irr(I_2(2))$. If $m>2$ is odd, 
\[\cS_{\pref}=\left\{\chi_0,\chi_1,\epsilon\right\}\cup
\left\{\chi_d\;\,|\,\text{$d$ divides $m$ and is a power of a prime
number}\right\}.\]
If $m>2$ is even, and $r=m/2$,
\[\cS_{\pref}=\left\{\chi_0,\chi_1,\epsilon\right\}\cup
\left\{\chi_d\;\,|\,\text{$d\ne r$ divides $m$ and is a power of a prime
number}\right\}\cup\left\{\chi_r'\right\}.\] 
In \cite[Def.~8.5]{AA}, a notion of \emph{pseudoparabolic
subgroup} of  a finite complex reflection group has been introduced. 
Then Theorem~8.13 of \cite{AA} says that the set $\cS_{\pref}$ is the set
of all the irreducible representations of $I_2(m)$ which are obtained by
truncated induction from special representations of pseudoparabolic subgroups.

We write $m$ as $m=p_1^{n_1}p_2^{n_2}\cdots p_k^{n_k}$ where $p_1$, $p_2$,
$\ldots$, $p_k$ are prime numbers such that $p_1<p_2<\cdots<p_k$. The
sequence of integers~(\ref{eq: sequence}) 
$d_0<d_1<\cdots<d_{N-1}<d_N$
with respect to $\cS=\cS_{\pref}$ satifies
\[d_{n_1+n_2+\cdots+n_{i-1}+l}=
p_1^{n_1}p_2^{n_2}\cdots p_{i-1}^{n_{i-1}}p_i^{l-1},
\]
for $1\le i\le k-1$ and $0\le l\le n_i$,
with $N=n_1+\cdots+n_k$ and
$d_N=p_1^{n_1}p_2^{n_2}\cdots p_k^{n_k-1}$ if $m$ is not a power of
$2$ and with $N=n-1$ and $d_N=2^{n-2}$ if $m=2^n$, $n\ge 2$.

\begin{example}
{\rm Assume that $m=2p$ with $p$ an odd prime number. Here we have $N=2$,
$d_2=p=r$ and
$\cS_{\pref}=\left\{\chi_0,\chi_1,\epsilon\right\}\cup
\left\{\chi_2\right\}\cup\left\{\chi_p'\right\}$. 
From Theorems~\ref{thm:main} and ~\ref{thm:max}, we obtain that the maximal 
Springer correspondence with respect to $\cS_{\pref}$ has the form
$\cC_0=\{\chi_0\}$, $\cC_1=\{\chi_1,\chi_p\}$,
$\cC_2=\{\chi_2,\chi_3,\ldots,\chi_{p-1}\}$, $\cC_{\chi_p}'=\{\chi_p'\}$
and $\cC_\epsilon=\{\epsilon\}$. It contains the actual Springer
correspondence of $G_2=I_2(6)$ in good characteristic as a special case (see section 4.1).
}\end{example}

\end{document}